\begin{document}

\theoremstyle{plain}
\newtheorem{theorem}{Theorem}
\newtheorem{corollary}[theorem]{Corollary}
\newtheorem{lemma}[theorem]{Lemma}
\newtheorem{proposition}[theorem]{Proposition}

\theoremstyle{definition}
\newtheorem{definition}[theorem]{Definition}
\newtheorem{example}[theorem]{Example}
\newtheorem{conjecture}[theorem]{Conjecture}

\theoremstyle{remark}
\newtheorem{remark}[theorem]{Remark}

\begin{center}
\vskip 1cm{\LARGE\bf \LARGE\bf Generalized Compositions of Natural Numbers \\
\vskip .1in
}
\vskip 1cm
\large {
Milan Janji\'c \\
Department of Mathematics and Informatics\\
 University of Banja Luka \\
Republic of Srpska , BA}\\
\end{center}

\begin{abstract}
We consider compositions of natural numbers when  there are different types of each natural number.   Several recursions as well as some closed formulas for the number of compositions is derived.  We also find its  relationships with some known classes of integers such as Fibonacci, Catalan, Pell, Pell-Lucas, and Jacobsthal numbers.

\end{abstract}

\section{Introduction}

Let $\mathbf b=(b_1,b_2,\ldots)$ be a sequence of nonnegative integers.
      Compositions  of $n$ in which there are $b_1$ different type of $1$'s, $b_2$ different type of $2$'s, and so on, will be called
    generalized compositions of $n$. We let  $c(n,\mathbf b)$ denote its number.
We may considered these compositions as colored compositions in which each number $i$ may be colored by one of  $b_i$ colors.
If    all $b_i$ are equal $1$ then the standard compositions are obtained.

It is clear that the following recursion for $c(n,\mathbf b)$ holds
\[c(n,\mathbf b)=b_1c(n-1,\mathbf b)+b_2c(n-2,\mathbf b)+\cdots+b_{n-1}c(1,\mathbf b)+\cdots+b_n,\]
having $b_1$ generalized compositions ending by one of $1$'s, $b_2$ generalized compositions ending by one of $2$'s, and so on. At the end, there are $b_n$ generalized compositions consisting of one of $n$'s.

We define the sequence $(a_1,a_2,\ldots)$ such that $a_1=1,$ and
\begin{equation}\label{e1}a_{n+1}=\sum_{i=1}^{n}b_{n+1-i}a_{i}.\end{equation}

It is clear that \[a_{n+1}=c(n,\mathbf b),\;(n=1,2,\ldots).\]

Equation (\ref{e1})   connects two sequences of nonnegative integers
\[(b_1,b_2,\ldots),\mbox{ and }(c_1,c_2,\ldots),\] where $c_i=a_{i+1},\;(i=1,2,\ldots).$

Obviously, for each sequence $(b_1,b_2,\ldots)$ we may form the sequence $(c_1,c_2,\ldots).$ Conversely is not true.
 Namely, equation (\ref{e1}) may be regarded as a recurrence relation with respect to $b$'s, but it does not ultimately produce  nonnegative integers.

The paper is organized as follows. In this section we find a simple but interesting connection of generalized compositions with Catalan numbers.

  In Section 2 we consider the case when $b$'s make an arithmetical progression. We shall prove that then the numbers $c(n,\mathbf b)$ satisfy a three terms homogenous recursion with constant coefficients. This means that a close formula for generalized compositions  may be obtained.
In a particular case the number of generalized composition is a Pell-Lucas number.

In Section 3 we consider the case when $b_i$ is a square function of $i.$ Then the numbers $c(n,\mathbf b)$ satisfy a four terms homogenous recursion with constant coefficients. Thus, in this case also we may derive an explicit formula for generalized compositions. Special attention is put on triangular numbers. Several results will be obtained  in the case when  $b$'s are triangular numbers. Then  the $a$'s are sums of binomial coefficients. Some identities, concerning sums of binomial coefficients,  will be derived by the the use  of Zeilberger's algorithm, which is described by Petkovsek and all., in  \cite{pet}.

In Section 4 we investigate the case when $b_i$ is an exponential function of $i.$ In Section 5 two result concerning the floor and the ceil functions will be proved.

 We shall see, in Section 6,  that the generalized compositions are closely related with Fibonacci numbers, as is the case with the standard compositions. Several recurrence relations  as well as some closed formulas for generalized compositions will be proved. New relationships of Fibonacci numbers with Pell, Jacobsthal and other classes of numbers are derived.

Note that there is a significant number of sequences in Sloane's OEIS, \cite{slo}, which terms equal the number of generalized compositions. Comment of these sequences in OEIS offer other interpretations of compositions. Sequence {A145839} connects generalized compositions with so called matrix compositions. Also  {A020729}, {A008776}, {A020698}, {A007484} connect them with Pisot sequences.

\begin{proposition} If $\mathbf b=(p,p,\ldots),$ then \[c(n,\mathbf b)=p(1+p)^{n-1}.\]
Particulary, $2^{n-1}$ is the number of all compositions of $n.$
\end{proposition}
\begin{proof} In this case the recurrence (\ref{e1}) becomes.
\[a_{n+1}=p\sum_{i=1}^na_i.\]
Replacing $n$ by $n+1$ yields
\[a_{n+2}=p\sum_{i=1}^{n+1}a_i.\] By substraction we obtain
\[a_{n+2}=(1+p)a_{n+1}.\] From this equation the assertion follows easily.

\end{proof}
The next result shows that Catalan numbers give  an example when  $b$'s produce $b$'s again.

\begin{proposition}
If $\mathbf b=(C_0,C_1,\ldots),$ where $C_i,\;(i=0,1,\ldots)$ are Catalan numbers,  then
\[ c(n,\mathbf b)=C_n,\;(n\geq 1).\]
\end{proposition}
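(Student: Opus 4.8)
The plan is to show that the defining recursion (\ref{e1}) for the sequence $(a_n)$, when specialized to $\mathbf b=(C_0,C_1,\ldots)$, reduces to the classical Catalan convolution identity. Recall from the excerpt that $a_1=1$ and $a_{n+1}=\sum_{i=1}^n b_{n+1-i}a_i$, and that $c(n,\mathbf b)=a_{n+1}$ for $n\geq 1$. So it suffices to prove that $a_{n+1}=C_n$ for all $n\geq 0$, where I interpret $a_1=1=C_0$ as the base case.

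First I would recall the fundamental recurrence for Catalan numbers, namely the convolution
\begin{equation*}
C_{n+1}=\sum_{k=0}^{n}C_kC_{n-k},
\end{equation*}
which holds for $n\geq 0$ with $C_0=1$. The strategy is a straightforward induction on $n$. For the base case, $a_1=1=C_0$, matching the claim $c(0,\mathbf b)$ convention, and one checks $a_2=b_1a_1=C_0\cdot 1=1=C_1$ directly. For the inductive step, I would assume $a_i=C_{i-1}$ for all $i\leq n+1$ and compute
\begin{equation*}
a_{n+2}=\sum_{i=1}^{n+1}b_{n+2-i}a_i=\sum_{i=1}^{n+1}C_{n+1-i}\,C_{i-1},
\end{equation*}
using $b_j=C_{j-1}$ (since $\mathbf b=(C_0,C_1,\ldots)$ means $b_1=C_0$, $b_2=C_1$, etc.) together with the inductive hypothesis.

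The key step is then a clean reindexing of this sum to match the Catalan convolution. Setting $k=i-1$, the sum becomes $\sum_{k=0}^{n}C_{n-k}C_k$, which is exactly $C_{n+1}$ by the convolution identity above. Hence $a_{n+2}=C_{n+1}$, completing the induction and yielding $c(n,\mathbf b)=a_{n+1}=C_n$ for $n\geq 1$.

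I do not anticipate a serious obstacle here, since the result is essentially the observation that the Catalan convolution has precisely the shape of recursion (\ref{e1}). The only point requiring care is the bookkeeping of the index shift between the $b$-sequence, the $a$-sequence, and the Catalan indices: one must track consistently that $b_j=C_{j-1}$ and $a_i=C_{i-1}$, so that the summand $b_{n+2-i}a_i$ equals $C_{n+1-i}C_{i-1}$ and the range $i=1,\ldots,n+1$ maps onto the full convolution range $k=0,\ldots,n$. Getting these offsets right is the entire content of the proof.
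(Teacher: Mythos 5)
Your proof is correct and follows essentially the same route as the paper: both specialize recursion (\ref{e1}) to $b_j=C_{j-1}$ and then induct using Segner's convolution identity $C_{n+1}=\sum_{k=0}^{n}C_kC_{n-k}$. You simply spell out the index bookkeeping and the base case that the paper leaves implicit.
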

\begin{proof}
In this case equation (\ref{e1}) has the form:
\[a_{n+1}=\sum_{i=1}^{n}C_{n-i}a_{i}=\sum_{i=0}^{n-1}C_{n-i-1}a_{i+1}.\]

The equation  $a_{n+1}=C_n$ follows by induction, using the well-known Segner's recurrence formula for Catalan numbers.
\end{proof}

\section{Arithmetic Progressions}
In this section we consider the case when $b_i$ is a linear function of $i,$ that is, when  $b$'s make an arithmetic progression.

 We shall prove that then the numbers of generalized compositions satisfy a three terms recursion with constant coefficients. In this way the explicit formula for the number of compositions may be obtained.

\begin{proposition} Let $n$ be a positive integer, let $m,\;k$ be nonnegative integers, and let $b_i=m(i-1)+k,\;(i=1,2,\ldots).$
Then
\[c(1,\mathbf b)=k,\;c(2,\mathbf b)=m+k+k^2,\]
\[c(n+1,\mathbf b)=(k+2)c(n,\mathbf b)+(m-k-1)c(n-1,\mathbf b).\]

\end{proposition}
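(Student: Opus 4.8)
The plan is to prove the two initial values directly from the defining recurrence (\ref{e1}), and then to obtain the three–term recursion by differencing twice, exploiting that the kernel $b_i=m(i-1)+k$ has \emph{constant} first difference $b_{i+1}-b_i=m$. This is exactly the structural feature that should turn the convolution (\ref{e1}) into a finite–order recursion with constant coefficients.

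First I would record the base cases. Since $c(n,\mathbf b)=a_{n+1}$ and $a_1=1$, equation (\ref{e1}) gives $c(1,\mathbf b)=a_2=b_1a_1=b_1=k$, and $c(2,\mathbf b)=a_3=b_2a_1+b_1a_2=b_2+b_1k=(m+k)+k^2$, which are the two stated values.

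The key step is to collapse the convolution by one differencing. Writing $a_{n+2}=\sum_{i=1}^{n+1}b_{n+2-i}a_i$ and $a_{n+1}=\sum_{i=1}^{n}b_{n+1-i}a_i$ and subtracting, the top summand $b_1a_{n+1}=ka_{n+1}$ splits off, while each of the remaining terms contributes the constant $b_{n+2-i}-b_{n+1-i}=m$; hence
\[
a_{n+2}-a_{n+1}=ka_{n+1}+m\sum_{i=1}^{n}a_i,\qquad(n\geq 1).
\]
This already replaces the whole linear kernel by a single partial sum. To eliminate that sum I would difference once more: write the same identity with $n$ replaced by $n-1$ (legitimate only for $n\geq 2$) and subtract. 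Since $\sum_{i=1}^{n}a_i-\sum_{i=1}^{n-1}a_i=a_n$, the partial sums cancel, leaving $a_{n+2}-2a_{n+1}+a_n=k(a_{n+1}-a_n)+ma_n$, which rearranges to $a_{n+2}=(k+2)a_{n+1}+(m-k-1)a_n$. Translating back through $c(n,\mathbf b)=a_{n+1}$ yields the claimed recursion, valid for $n\geq 2$, consistent with the two given initial values.

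The only real care required is the bookkeeping of summation limits. I must verify that the isolated boundary term in the first differencing is precisely $b_1a_{n+1}$ (so that $b_1=k$ is the coefficient that appears), and that the second differencing is permitted only from $n\geq 2$ onward, so that $a_{(n-1)+1}$ is genuinely produced by (\ref{e1}) rather than being the seed value $a_1$. No deeper obstacle arises: the crux of the argument is simply the observation that a linear kernel becomes a constant after one difference and vanishes after two.
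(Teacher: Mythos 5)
Your proof is correct and follows essentially the same route as the paper: your first differencing produces exactly the paper's intermediate identity $a_{n+1}=(k+1)a_n+m\sum_{i=1}^{n-1}a_i$ (the paper reaches it by reindexing the linear kernel rather than by subtracting consecutive convolutions), and the second differencing that cancels the partial sums is the paper's subtraction of its equations (\ref{a1}) and (\ref{a2}) verbatim. Your attention to the summation limits and to the range $n\geq 2$ is sound, and the base cases match the paper's.
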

\begin{proof} Equation (\ref{e1}) takes the form: \[a_{n+1}=\sum_{i=1}^n[m(n-i)+k]a_i.\]
It is easy to see that  \[a_2=k,\;a_3=m+k+k^2.\]
Further, for $n>2$  we have \[a_{n+1}=ka_n+\sum_{i=1}^{n-1}[m(n-i)+k]a_i=ka_n+(m+k)a_{n-1}+\sum_{i=1}^{n-2}[m(n-1-i)+k]a_i+m\sum_{i=1}^{n-2}a_i.\]
We conclude that
\begin{equation}\label{a1}a_{n+1}=(k+1)a_n+ma_{n-1}+m\sum_{i=1}^{n-2}a_i.\end{equation}
Replacing $n$ by $n+1$ yields
\begin{equation}\label{a2}a_{n+2}=(k+1)a_{n+1}+ma_{n}+m\sum_{i=1}^{n-1}a_i.\end{equation}
Subtracting  equation (\ref{a1}) from (\ref{a2}) we obtain
\[a_{n+2}=(2+k)a_{n+1}+(m-k-1)a_n.\]
\end{proof}
In the next corollary we give some particular cases.

\begin{corollary}
\enumerate
\item[(i)]  If $m=1,\;k=0$ then   \[c(1,\mathbf b)=0,\;c(n,\mathbf b)=2^{n-2},\;(n>1).\]
\item[(ii)] If $m=2,\;k=0$ then   \[c(n,\mathbf b)=2\sum_{i=0}^{\lfloor\frac n2 \rfloor}{n\choose 2i}2^{i}.\]

\item[(iii)]  If $m=1,\;k=1$ then  \[c(n,\mathbf b)=F_{2n}.\]
\item[(iv)]
 If $k=m-1$ then
  \[c(1,\mathbf b)=m-1,\;c(n,\mathbf b)=m^2\cdot (m+1)^{n-2},\;(n>1).\]
\end{corollary}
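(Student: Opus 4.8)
The unifying idea is that every part is obtained by feeding a specific pair $(m,k)$ into the three–term recursion of the preceding proposition, namely $c(n+1,\mathbf b)=(k+2)c(n,\mathbf b)+(m-k-1)c(n-1,\mathbf b)$, together with its initial values $c(1,\mathbf b)=k$ and $c(2,\mathbf b)=m+k+k^2$. The four items split naturally into two regimes according to whether the coefficient $m-k-1$ of the $c(n-1,\mathbf b)$ term vanishes, and I would organize the proof around this dichotomy.

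First I would dispose of the cases in which $m-k-1=0$, which are exactly (i) (with $m=1,\,k=0$) and (iv) (with $k=m-1$). Here the recursion degenerates to the first–order relation $c(n+1,\mathbf b)=(k+2)c(n,\mathbf b)$, so for $n\ge 2$ the sequence is simply geometric with ratio $k+2$. In (i) this ratio equals $2$ and, since $c(2,\mathbf b)=1$, iteration gives $c(n,\mathbf b)=2^{n-2}$ for $n>1$ (with $c(1,\mathbf b)=0$); in (iv) one checks $c(2,\mathbf b)=m+(m-1)+(m-1)^2=m^2$, the ratio is $m+1$, and iterating yields $c(n,\mathbf b)=m^2(m+1)^{n-2}$ for $n>1$ (with $c(1,\mathbf b)=m-1$). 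Both are immediate once the $c(n-1,\mathbf b)$ term is seen to drop out.

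The remaining two items carry a genuine second–order recursion. For (iii), putting $m=1,\,k=1$ turns the relation into $c(n+1,\mathbf b)=3c(n,\mathbf b)-c(n-1,\mathbf b)$, with $c(1,\mathbf b)=1$ and $c(2,\mathbf b)=3$. I would then invoke the classical identity $F_{2n+2}=3F_{2n}-F_{2n-2}$ for even–indexed Fibonacci numbers and check the two base cases $F_2=1$, $F_4=3$; a one–line induction then forces $c(n,\mathbf b)=F_{2n}$. For (ii), setting $m=2,\,k=0$ gives the Pell–type recursion $c(n+1,\mathbf b)=2c(n,\mathbf b)+c(n-1,\mathbf b)$. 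The plan is to solve it through its characteristic equation $x^2=2x+1$, whose roots are $1\pm\sqrt2$, and to recognize the proposed binomial sum as the corresponding closed form by way of $2\sum_{i=0}^{\lfloor n/2\rfloor}\binom{n}{2i}2^i=(1+\sqrt2)^n+(1-\sqrt2)^n$, which follows by adding $(1+\sqrt2)^n$ and $(1-\sqrt2)^n$ so that the odd binomial terms cancel and each surviving factor $(\sqrt2)^{2i}$ becomes $2^i$. The main obstacle I anticipate is precisely this last item: one must verify that the stated binomial sum both satisfies the two–term recursion and reproduces the correct initial data, and it is the matching of the initial conditions — and hence the exact index at which the closed form is read off — that demands the most care.
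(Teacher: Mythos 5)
Your treatment of (i), (iii) and (iv) is correct and essentially identical to the paper's: the paper likewise reads off the degenerate first-order recursion in (i) and (iv) (checking $c(2,\mathbf b)=m+(m-1)+(m-1)^2=m^2$ when $k=m-1$, with ratio $m+1$), and proves (iii) from the same specialization $c(n+1,\mathbf b)=3c(n,\mathbf b)-c(n-1,\mathbf b)$, $c(1,\mathbf b)=1$, $c(2,\mathbf b)=3$, appealing to the even-index Fibonacci identity (Identity 7 of Benjamin and Quinn) exactly as you do by induction.

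For (ii), however, the obstacle you flagged at the end is not merely a matter of care: the verification you propose cannot succeed, because the closed formula as printed is false. Your identity $2\sum_{i=0}^{\lfloor n/2\rfloor}\binom{n}{2i}2^i=(1+\sqrt2)^n+(1-\sqrt2)^n$ is correct, so the binomial sum is the Pell--Lucas sequence $Q_n$, with values $2,6,14,34,\ldots$ for $n=1,2,3,4$. But the recursion $c(n+1,\mathbf b)=2c(n,\mathbf b)+c(n-1,\mathbf b)$ together with the initial data forced by the proposition, $c(1,\mathbf b)=k=0$ and $c(2,\mathbf b)=m+k+k^2=2$, produces $0,2,4,10,24,\ldots$, i.e.\ $c(n,\mathbf b)=2P_{n-1}$, twice a Pell number. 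No re-indexing reconciles the two sequences: already $c(1,\mathbf b)=0$ while every value of the sum is positive, and $c(3,\mathbf b)=4\neq 14$. Both sequences satisfy the same second-order recurrence but with different initial conditions, hence are distinct solutions; the correct closed form is
\[
c(n,\mathbf b)=\frac{(1+\sqrt2)^{n-1}-(1-\sqrt2)^{n-1}}{\sqrt2}=2\sum_{i\geq 0}\binom{n-1}{2i+1}2^{i}.
\]
Note that the paper's own proof silently avoids this issue: it only derives the recurrence and remarks that it is ``the recurrence for Pell--Lucas numbers,'' never verifying the displayed sum. Your more demanding plan --- solving via the characteristic roots $1\pm\sqrt2$ and then matching initial conditions --- is precisely what exposes the error; you should carry that matching out explicitly and record the corrected formula rather than leave the index-matching unresolved.
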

\begin{proof}
$(i)$ is obvious.

In the case $(ii)$ the recurrence equation takes the form
\[c(1,\mathbf b)=0,\;c(2,\mathbf b)=2,\]
\[c(n+1,\mathbf b)=2c(n,\mathbf b)+c(n-1,\mathbf b),\]
which is the recurrence for Pell-Lucas numbers.

In the case $(iii)$ the recurrence becomes
\[c(1,\mathbf b)=1,\;c(2,\mathbf b)=3,\]
\[c(n+1,\mathbf b)=3c(n,\mathbf b)-c(n-1,\mathbf b).\]

This is the recurrence equation for Fibonacci numbers with even indices by Identity 7 from \cite{bk}.

Finally, for $k=m-1$ we have
\[c(1,\mathbf b)=m-1,\;c(2,\mathbf b)=m^2,\]
\[c(n+1,\mathbf b)=(m+1)c(n,\mathbf b),\;(n>2),\]
and $(iv)$ is true.
\end{proof}
\begin{remark} The formulas from the preceding corollary generate the following sequences from OEIS.

\begin{tabular}{ll} $m=1,\; k=0,$ {A000079};&$m=2,\; k=0,$ Pell-Lucas  numbers, {A052542};\\
 $m=1,\; k=1,$ {A001906}.
\end{tabular}

In the case $k=m-1$ we have

  \begin{tabular}{ccc}
  $m=2,$ {A003946};& $m=3,$ {A055841};& $m=4,$ {A055842};\\  $m=5,$ {A055846};& $m=6,$ {A055270};& $m=7,$ {A055847};\\ $m=8,$ {A055995};& $m=9,$ {A055996};& $m=10,$ {A056002};\\ $m=11,$ {A056116}.\end{tabular}
\end{remark}

\section{Square Functions}
In the case that $b_i$ is a square function od $i$ we obtain the four terms recurrence relation for $c(n,\mathbf b).$ This means that
we may obtained a closed formula for generalized compositions.

\begin{proposition}\label{tt1} Let $n$ be a positive integer, and let $k,m,p$ be arbitrary (rational) numbers such that  $b_i=ki^2+mi+p,\;(i=1,2,\ldots)$ are nonnegative integers. Then
\[c(1,\mathbf b)=k+m+p,\;c(2,\mathbf b)=k^2+m^2+p^2+2(km+mp+kp)+4k+2m+p,\]
\[\;c(3,\mathbf b)=8k+3m+p+2(4k^2+2m^2+p^2+6km+5kp+3mp)+\]
\[+k^3+m^3+p^3+3(km^2+kp^2+mp^2+2kmp).\]
\[c(n+1,\mathbf b)=(k+m+p+3)c(n,\mathbf b)+(k-m-2p-3)c(n-1,\mathbf b)+(p+1)c(n-2,\mathbf b),\;(n\geq 3).\]
\end{proposition}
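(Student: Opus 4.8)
The plan is to follow the strategy of the linear case (the preceding proposition on arithmetic progressions), but to apply the shift-and-subtract elimination three times rather than once, reflecting that $b_i$ now has degree two. First I would record the initial data directly from (\ref{e1}): since $a_{n+1}=c(n,\mathbf b)$ and $a_1=1$, one has $c(1,\mathbf b)=a_2=b_1$, $c(2,\mathbf b)=a_3=b_2+b_1^2$, and $c(3,\mathbf b)=a_4=b_3+2b_1b_2+b_1^3$. Substituting $b_1=k+m+p$, $b_2=4k+2m+p$, $b_3=9k+3m+p$ and expanding then yields the three displayed closed forms. These verifications are purely mechanical.

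For the recurrence, the key observation is that differencing the convolution (\ref{e1}) in $n$ lowers the degree of the polynomial weight, because $b_{j+1}-b_j=2kj+k+m$ is linear and its next difference $2k$ is constant. Concretely, writing (\ref{e1}) for $a_{n+2}$, splitting off the $i=n+1$ term, and replacing $b_{n+2-i}$ by $b_{n+1-i}+[2k(n+1-i)+k+m]$ gives
\[a_{n+2}=(b_1+1)a_{n+1}+g(n),\qquad g(n):=\sum_{i=1}^n\bigl[2k(n+1-i)+k+m\bigr]a_i.\]
The weight in $g(n)$ is now linear, so the same maneuver applied to $g(n+1)$ — using $c_{j+1}-c_j=2k$ for $c_j=2kj+k+m$ — produces $g(n+1)-g(n)=(3k+m)a_{n+1}+2kS_n$, where $S_n=\sum_{i=1}^n a_i$. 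Differencing once more removes the sum entirely, since $S_n-S_{n-1}=a_n$, giving
\[g(n+1)-2g(n)+g(n-1)=(3k+m)a_{n+1}-(k+m)a_n.\]

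Finally I would eliminate $g$ by substituting $g(n)=a_{n+2}-(b_1+1)a_{n+1}$ together with its shifts into the last display, setting $b_1=k+m+p$, and collecting coefficients; the result simplifies to the asserted coefficients $k+m+p+3$, $k-m-2p-3$, and $p+1$ after the index shift $a_{n+1}=c(n,\mathbf b)$. The one point demanding care is the range of validity: the second difference needs the relation for $g$ at both $n$ and $n-1$, hence $n\geq 2$ there, which after shifting back to $c$ is exactly the stated constraint $n\geq 3$. The main obstacle is bookkeeping rather than ideas — keeping the three levels of differencing and their admissible ranges straight, and carrying out the final coefficient collection without error. As an independent check and alternative proof, I would compute the generating function $A(x)=\sum_{n\ge1}a_nx^n=x/(1-B(x))$ with $B(x)=\sum_{j\ge1}b_jx^j$; since $b_j$ is quadratic, $B(x)$ has denominator $(1-x)^3$, so $A(x)$ is rational with cubic denominator $1-(k+m+p+3)x+(3-k+m+2p)x^2-(p+1)x^3$, which reproduces the three-term recurrence immediately.
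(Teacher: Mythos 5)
Your proof is correct and takes essentially the same route as the paper: the paper's chain (\ref{q1})--(\ref{q4}) is precisely your three levels of shift-and-subtract differencing, each lowering the degree of the polynomial weight from quadratic to linear to constant to zero, and your range bookkeeping ($n\geq 2$ for the second difference of $g$, hence $n\geq 3$ for $c$) matches the stated constraint. One aside: your (correct) expansion $c(3,\mathbf b)=b_3+2b_1b_2+b_1^3$ with $b_3=9k+3m+p$ does not actually reproduce the paper's displayed $c(3,\mathbf b)$, because that display contains typos (the linear term should be $9k+3m+p$, not $8k+3m+p$, and the cubic part should be the full $(k+m+p)^3$, i.e.\ it is missing $3(k^2m+k^2p+m^2p)$), as one checks with $k=1,\,m=p=0$, where the true value is $18$ but the display gives $17$; your generating-function cross-check, with denominator $1-(k+m+p+3)x+(3-k+m+2p)x^2-(p+1)x^3$, is a sound independent confirmation not present in the paper.
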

\begin{proof} We have
\[a_{n+1}=\sum_{i=1}^n\big[k(n-i+1)^2+m(n-i+1)+p\big]a_i.\]
It is easy to obtain the values $a_2,a_3$ and $a_4.$
For $n>3$ we have
\[a_{n+1}=(k+p+m)a_n+\sum_{i=1}^{n-1}\big[k(n-i+1)^2+m(n-i+1)+p\big]a_i.\]
It follows that
\begin{equation}\label{q1}a_{n+1}=(k+m+p+1)a_n+(3k+m)a_{n-1}+
\sum_{i=1}^{n-2}\big[k(2n-2i+1)+m\big]a_i.\end{equation}
Replacing $n$ by $n+1$ we obtain
\begin{equation}\label{q2}a_{n+2}=(k+m+p+1)a_{n+1}+(3k+m)a_{n}+
\sum_{i=1}^{n-1}\big[k(2n-2i+1)+m\big]a_i+2k\sum_{i=1}^{n-1}a_i.\end{equation}
Subtracting (\ref{q1}) from (\ref{q2}) yields
\begin{equation}\label{q3}a_{n+2}=(k+m+p+2)a_{n+1}+(2k-p-1)a_{n}+2k\sum_{i=1}^{n-1}a_i.\end{equation}
 Replacing $n$ by $n+1$ we obtain
\begin{equation}\label{q4}a_{n+3}=(k+m+p+2)a_{n+2}+(2k-p-1)a_{n+1}+2k\sum_{i=1}^{n}a_i.\end{equation}
Finally, subtracting (\ref{q3}) from (\ref{q4}) yields
\[a_{n+3}=(k+m+p+3)a_{n+2}+(k-m-2p-3)a_{n+1}+(p+1)a_n.\]
\end{proof}
In the next corollary we give two particular cases.
\begin{corollary}
\enumerate
\item[(i)]  If $k=1,m=0,p=-1$ then   \[c(1,\mathbf b)=0,c(2,\mathbf b)=3,\]
    \[c(n,\mathbf b)=8\cdot 3^{n-3},\;(n\geq 3).\]

\item[(ii)] If $k=1,m=1,p=-1$ then   \[c(1,\mathbf b)=1,c(2,\mathbf b)=6,c(3,\mathbf b)=22,\]
    \[c(n,\mathbf b)=\frac{9-5\sqrt 3}{6}(2+\sqrt 3)^n+\frac{9+5\sqrt 3}{6}(2-\sqrt 3)^n.\]
\end{corollary}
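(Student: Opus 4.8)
The plan is to obtain both parts as direct specializations of Proposition \ref{tt1}. For each choice of $(k,m,p)$ I would (a) read off the initial values $c(1,\mathbf b),\,c(2,\mathbf b),\,c(3,\mathbf b)$ from the explicit formulas in the proposition, or equivalently from the convolution $a_4=b_3+2b_1b_2+b_1^3$ that (\ref{e1}) produces, and (b) substitute into the three coefficients $k+m+p+3$, $k-m-2p-3$, $p+1$ of the three-term recursion. The key structural observation is that in both cases $p=-1$, so the coefficient $p+1$ of $c(n-2,\mathbf b)$ vanishes and the recursion of Proposition \ref{tt1} degenerates to a constant-coefficient recursion of order at most two, which can be solved in closed form.

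For part $(i)$, with $k=1,\,m=0,\,p=-1$ the three coefficients become $3$, $0$, and $0$, so the recursion collapses to $c(n+1,\mathbf b)=3\,c(n,\mathbf b)$ for $n\ge 3$. I would evaluate the proposition's formulas to record $c(1,\mathbf b)=0$ and $c(2,\mathbf b)=3$, and compute $c(3,\mathbf b)=8$ (most quickly from $a_4=b_3+2b_1b_2+b_1^3$ with $b_1=0,\,b_2=3,\,b_3=8$). A one-line induction on the geometric recursion then yields $c(n,\mathbf b)=8\cdot 3^{\,n-3}$ for $n\ge 3$.

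For part $(ii)$, with $k=1,\,m=1,\,p=-1$ the coefficients become $4$, $-1$, and $0$, so the recursion reduces to the homogeneous two-term relation $c(n+1,\mathbf b)=4\,c(n,\mathbf b)-c(n-1,\mathbf b)$, valid for $n\ge 3$. Its characteristic equation $x^2-4x+1=0$ has roots $2\pm\sqrt3$, giving the general solution $c(n,\mathbf b)=A(2+\sqrt3)^n+B(2-\sqrt3)^n$. I would then fix $A$ and $B$ by solving the $2\times 2$ linear system obtained from two known values, most safely $c(2,\mathbf b)=6$ and $c(3,\mathbf b)=22$, and confirm the result against the first computed terms $1,6,22,82,\dots$.

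The step I expect to be the main obstacle is not the algebra but the bookkeeping on the range of validity. The three-term recursion of Proposition \ref{tt1} is asserted only for $n\ge 3$, so the closed form is pinned down by the pair $\bigl(c(2,\mathbf b),c(3,\mathbf b)\bigr)$ (equivalently by $c(3,\mathbf b)$ and $c(4,\mathbf b)$) and governs the sequence only from that index onward. One must therefore check carefully for which $n$ the stated closed form is meant to hold, since the small initial term $c(1,\mathbf b)$ need not satisfy the same recursion; fixing the constants $A,B$ against the correct anchor values is exactly the place where an indexing slip would propagate into the reported coefficients.
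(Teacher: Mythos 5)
Your method coincides with the paper's: specialize the recursion of Proposition \ref{tt1}, note that $p=-1$ makes the coefficient $p+1$ vanish, and solve the resulting two-term constant-coefficient recursion via its characteristic equation. Where you differ is precisely in the bookkeeping you flagged as the main hazard, and there your version is right while the published one is not. Three concrete points. First, of your two proposed routes to the anchor values, only the convolution $a_4=b_3+2b_1b_2+b_1^3$ is reliable: the displayed expression for $c(3,\mathbf b)$ in Proposition \ref{tt1} is itself garbled (its linear part should read $9k+3m+p$, and its cubic part is not the full expansion of $(k+m+p)^3$), so it yields $10$ and $24$ instead of the correct anchors $8$ and $22$. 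Second, in part (i) the paper's proof asserts $c(n+1,\mathbf b)=3c(n,\mathbf b)$ already for $n\geq 2$, which would force $c(3,\mathbf b)=9$; as you argue, the recursion starts only at $n=3$ and the closed form is anchored at $c(3,\mathbf b)=8$. Third, in part (ii) the linear solve that the paper skips over (``solving the characteristic equation \dots we conclude that the assertion is true''), carried out against your anchors $c(2,\mathbf b)=6$ and $c(3,\mathbf b)=22$, gives
\[c(n,\mathbf b)=\frac{3-\sqrt 3}{3}\,(2+\sqrt 3)^n+\frac{3+\sqrt 3}{3}\,(2-\sqrt 3)^n,\qquad (n\geq 2),\]
and $c(1,\mathbf b)=1$ does not fit this closed form (the right-hand side equals $2$ at $n=1$, consistent with the recursion failing there: $4\cdot 6-1=23\neq 22$). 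The coefficients $\frac{9\mp 5\sqrt 3}{6}$ printed in the corollary reproduce $c(1,\mathbf b)=1$ but give the value $1$ at $n=2$ and generate $1,1,3,11,41,\ldots$ instead of $1,6,22,82,\ldots$, so the published closed form is wrong for every $n\geq 2$ and no reindexing rescues it. Executed as written, your proposal proves the corrected statement; the indexing slip you anticipated in fixing $A$ and $B$ is exactly the error in the paper.
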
\label{cor1}
\begin{proof}
The assertion $(i)$ is true since, in this case, the recurrence equation becomes \[c(n+1,\mathbf b)=3c(n,\mathbf b),\;(n\geq 2).\]
In the case $(ii)$ the recurrence takes the form:
\[c(n+1,\mathbf b)=4c(n,\mathbf b)-c(n-1,\mathbf b),\;(n\geq 3).\]
Solving the characteristic equation of this three terms recurrence equation we conclude that the assertion is true.
\end{proof}
\begin{remark} We state two sequences from OEIS generated by the preceding formulas.
$k=1,\;m=0,\;p=-1$, {A118264},  $k=1,\;m=1,\;p=-1$,  {A003699}.
\end{remark}

Since $n\choose 2$ is a square function of $n$ we may derive from the preceding proposition some formulas which connect triangular numbers with generalized compositions.
\begin{corollary}
\enumerate
\item[(i)]  If $b_i={i-2\choose 2},\;(i=1,2,\ldots)$ then
\[c(1,\mathbf b)=1,c(2,\mathbf b)=1,c(3,\mathbf b)=1,\]
\[c(n+1,\mathbf b)=4c(n,\mathbf b)-6c(n-1,\mathbf b)+4c(n-2,\mathbf b),\;(n\geq 3).\]
Explicitly,
\[c(n,\mathbf b)=\sum_{i=0}^{n}{n\choose 4n-4i}.\]

\item[(ii)] If $b_i={i-1\choose 2},\;(i=1,2,\ldots)$ then
\[c(1,\mathbf b)=0,c(2,\mathbf b)=0,c(3,\mathbf b)=1,\]
\[c(n+1,\mathbf b)=3c(n,\mathbf b)-3c(n-1,\mathbf b)+2c(n-2,\mathbf b),\;(n\geq 3).\]
Explicitly,
\[c(n,\mathbf b)=\sum_{i=0}^{\lfloor\frac{n-3}{3}\rfloor}{n-1\choose 3i+2}.\]

\item[(iii)]  If $b_i={i\choose 2},\;(i=1,2,\ldots)$ then
\[c(1,\mathbf b)=0,c(2,\mathbf b)=1,c(3,\mathbf b)=3,\]
\[c(n+1,\mathbf b)=3c(n,\mathbf b)-2c(n-1,\mathbf b)+c(n-2,\mathbf b),\;(n\geq 3).\]
Explicitly,
\[c(n,\mathbf b)=\sum_{i=0}^{n}{n+i\choose 3i+2}.\]

\item[(iv)]  If $b_i={i+1\choose 2},\;(i=1,2,\ldots)$ then
\[c(1,\mathbf b)=1,c(2,\mathbf b)=4,c(3,\mathbf b)=13,\]
\[c(n+1,\mathbf b)=4c(n,\mathbf b)-3c(n-1,\mathbf b)+c(n-2,\mathbf b),\;(n\geq 3).\]
Explicitly,
\[c(n,\mathbf b)=\sum_{i=0}^{n}{n+2i-1\choose n-i}.\]
\end{corollary}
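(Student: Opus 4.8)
The plan is to treat all four items uniformly, since each $b_i$ is a quadratic in $i$ and hence fits the hypothesis of Proposition~\ref{tt1}. Expanding the binomial coefficients as polynomials gives in every case $b_i=\tfrac12 i^2+mi+p$ with $k=\tfrac12$ and $(m,p)=(-\tfrac52,3),(-\tfrac32,1),(-\tfrac12,0),(\tfrac12,0)$ for items (i)--(iv) respectively. First I would substitute these values into the three coefficients $k+m+p+3$, $k-m-2p-3$, $p+1$ of the recurrence in Proposition~\ref{tt1}; a one-line check shows they reproduce exactly the triples $(4,-6,4)$, $(3,-3,2)$, $(3,-2,1)$, $(4,-3,1)$ claimed in the four statements. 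The three initial values are then obtained either by specializing the closed forms for $c(1,\mathbf b),c(2,\mathbf b),c(3,\mathbf b)$ recorded in Proposition~\ref{tt1}, or --- more transparently --- by running the defining recurrence (\ref{e1}) directly against the concrete sequences $\mathbf b=(1,0,0,1,3,\ldots)$ for (i) and $\mathbf b=(0,0,1,3,6,\ldots)$ for (ii), whose leading zeros make $a_2,a_3,a_4$ immediate to read off. This settles the recurrence-plus-initial-value halves of all four items.

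For the explicit formulas the strategy is to denote the stated binomial sum by $S(n)$ and to show that $S(n)$ satisfies the very same order-three constant-coefficient recurrence and the same three initial terms as $c(n,\mathbf b)$, after which uniqueness of a sequence determined by a fixed recurrence together with three initial values forces $S(n)=c(n,\mathbf b)$. The nontrivial input is to prove that $S(n)$ obeys a homogeneous linear recurrence whose characteristic polynomial matches that of the corresponding $c$-recurrence, and this is exactly where Zeilberger's algorithm (creative telescoping), as described in \cite{pet}, enters. Viewing the summand $F(n,i)$ as a proper hypergeometric term in the pair $(n,i)$, the algorithm returns polynomials $c_0(n),\ldots,c_J(n)$ and a certificate $G(n,i)$ with $\sum_{j=0}^{J}c_j(n)F(n+j,i)=G(n,i+1)-G(n,i)$; summing over $i$ telescopes the right-hand side to $0$ and yields the recurrence for $S(n)$, after which one compares with the target recurrence and matches initial terms.

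I expect the main obstacle to lie in items (iii) and (iv), where the summands $\binom{n+i}{3i+2}$ and $\binom{n+2i-1}{n-i}$ are genuine \emph{diagonal} sums: both the upper and the lower argument of the binomial coefficient move with $i$ and with $n$ simultaneously, so these are not plain roots-of-unity filters and the closed form cannot simply be read off. Here one must set up the telescoping certificate carefully --- checking that the summand is proper hypergeometric in $(n,i)$, that the telescoped boundary terms vanish over the stated (and, in (ii), floor-truncated) range, and that the a priori polynomial-coefficient recurrence returned by the algorithm collapses to the constant-coefficient order-three recurrence already proved for $c(n,\mathbf b)$. By contrast, items (i) and (ii) are gap sums, $\sum_{j\ge 0}\binom{n}{4j}$ and $\sum_{j\ge 0}\binom{n-1}{3j+2}$, which admit an immediate roots-of-unity cross-check: in (i), for instance, $\sum_{j\ge 0}\binom{n}{4j}=\tfrac14\big[2^n+(1+\mathrm i)^n+(1-\mathrm i)^n\big]$ with $\mathrm i^2=-1$, whose exponential bases $2,\,1\pm\mathrm i$ are precisely the roots of the characteristic polynomial $x^3-4x^2+6x-4=(x-2)(x^2-2x+2)$ of the recurrence in (i). Matching three initial terms in each case then completes the identification $S(n)=c(n,\mathbf b)$.
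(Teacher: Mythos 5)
Your proposal takes essentially the same route as the paper: the paper's proof likewise obtains (i)--(iv) by specializing Proposition~\ref{tt1} at $k=\tfrac12$ with exactly your four pairs $(m,p)=(-\tfrac52,3),(-\tfrac32,1),(-\tfrac12,0),(\tfrac12,0)$, reads off the initial values, and certifies the explicit binomial-sum formulas via Zeilberger's algorithm from \cite{pet}. Your substituted coefficients and initial terms all check out, and the roots-of-unity verification for (i) is a pleasant independent confirmation, but it does not change the argument, which matches the paper's (much terser) proof.
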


\begin{proof}
  The assertion $(i)$ is obtained for  $k=\frac 12,\;m=-\frac 52,\;p=3.$

  The assertion $(ii)$ is obtained for  $k=\frac 12,\;m=-\frac 32,\;p=1.$

The assertion $(iii)$ is obtained for  $k=\frac 12,\;m=-\frac 12,\;p=0.$

The assertion $(iv)$ is obtained for  $k=\frac 12,\;m=\frac 12,\;p=0.$

The explicit formulas are obtained by the use of Zeilberger's algorithm, \cite{pet}.
\end{proof}

\begin{remark} In the case $b_i={i+2\choose 2},\;(i=1,2,\ldots)$
we obtain {A145839} which counts the number of $3$-compositions of $n.$ This connects our compositions with the so called matrix compositions.

The following sequences in OEIS are generated by the preceding formulas.

\begin{tabular}{cccc} (i), {A038503};&(ii), {A024495};& (iii), {A095263};&
(iv), {A095263}.\end{tabular}
\end{remark}
\section{Exponential Functions}
The following result concerns the case when $b_i$ is an exponential functions of $i.$ Then, again, the numbers $c(n,\mathbf b)$ satisfy a three terms homogeneous recurrence relation with constant coefficients.
\begin{proposition} Let $n$ be a positive integer. If $b_i=k+pm^{i-1},\;(i=1,2,\ldots),$ then
\[c(1,\mathbf b)=k+p,\;c(2,\mathbf b)=k+pm+(k+p)^2,\]
\[c(n,\mathbf b)=(k+m+p+1)c(n-1,\mathbf b)-(km+m+p)c(n-1,\mathbf b),\;(n>2).\]
\end{proposition}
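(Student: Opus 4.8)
The plan is to follow the same strategy as the preceding propositions: start from the defining recursion (\ref{e1}), isolate an auxiliary sum that carries the exponential part, and then eliminate it by combining consecutive instances of the relation. Substituting $b_{n+1-i}=k+pm^{n-i}$ into (\ref{e1}) gives
\[a_{n+1}=k\sum_{i=1}^n a_i+p\sum_{i=1}^n m^{n-i}a_i.\]
The first sum is handled exactly as in the earlier proofs, but the geometric weights $m^{n-i}$ in the second sum do not telescope under a plain subtraction of shifted relations, because the exponent depends on $n$. So first I would introduce the quantity $T_n=\sum_{i=1}^n m^{n-i}a_i$ and record its one-step recursion $T_n=mT_{n-1}+a_n$, which is the key observation: a geometric sum satisfies its own first-order linear recurrence.

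Next, I would form the difference $a_{n+2}-a_{n+1}$. The difference of the two constant sums produces $k\,a_{n+1}$, while the difference of the two geometric sums, after factoring $m^{n+1-i}-m^{n-i}=(m-1)m^{n-i}$, produces $p\,a_{n+1}+p(m-1)T_n$. Collecting terms yields the intermediate relation
\[a_{n+2}=(k+p+1)a_{n+1}+p(m-1)T_n.\]
This still contains $T_n$, so the remaining work is to remove it.

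To eliminate $T_n$, I would write the same relation one step earlier, namely $a_{n+1}=(k+p+1)a_n+p(m-1)T_{n-1}$, solve it for $p(m-1)T_{n-1}$, and then use $T_n=mT_{n-1}+a_n$ to express $p(m-1)T_n=m\bigl[a_{n+1}-(k+p+1)a_n\bigr]+p(m-1)a_n$. Substituting this into the intermediate relation and simplifying the coefficients gives
\[a_{n+2}=(k+m+p+1)a_{n+1}-(km+m+p)a_n,\]
which, after translating back via $c(n,\mathbf b)=a_{n+1}$, is the asserted three-term recursion (so that the second term on the right is $c(n-2,\mathbf b)$, not $c(n-1,\mathbf b)$). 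The two initial values $c(1,\mathbf b)=k+p$ and $c(2,\mathbf b)=k+pm+(k+p)^2$ follow by evaluating (\ref{e1}) directly at $n=1$ and $n=2$, using $b_1=k+p$ and $b_2=k+pm$.

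The main obstacle is purely the bookkeeping in the elimination step: because the exponential term does not vanish under a single subtraction, the usual trick of subtracting two shifted copies that suffices in the earlier proofs is not enough here, and one must instead exploit the auxiliary recursion $T_n=mT_{n-1}+a_n$ to close the system. Once that observation is in place, the rest is a routine comparison of coefficients.
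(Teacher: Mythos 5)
Your proof is correct and is essentially the paper's own argument in cleaner notation: your intermediate relation $a_{n+2}=(k+p+1)a_{n+1}+p(m-1)T_n$ is exactly the paper's equation (\ref{ppp1}) shifted by one index (with $T_{n-1}=a_{n-1}+\sum_{i=1}^{n-2}m^{n-1-i}a_i$), and eliminating $T_n$ via $T_n=mT_{n-1}+a_n$ is algebraically identical to the paper's step of subtracting $m$ times the shifted relation (\ref{ppp1}) from (\ref{ppp2}). You also correctly flag the typo in the statement: the second term must be $c(n-2,\mathbf b)$, not $c(n-1,\mathbf b)$, as confirmed by the paper's own concluding display $a_{n+2}=(k+p+m+1)a_{n+1}-(p+m+mk)a_n$.
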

\begin{proof} Equation (\ref{e1}) has the form:
\[a_{n+1}=\sum_{i=1}^n(k+pm^{n-i})a_i.\]
It follows that
\[a_2=k+p,\;a_3=k+pm+(k+p)^2.\]
Further we have
\[a_{n+1}=(k+p)a_n+\sum_{i=1}^{n-1}(k+pm^{n-i})a_i=(k+p)a_n+k\sum_{i=1}^{n-1}a_i+p\sum_{i=1}^{n-1}m^{n-i}a_i.\]
Hence,
\[a_{n+1}=
(k+p)a_n+(k+p)a_{n-1}+k\sum_{i=1}^{n-2}a_i+pm\sum_{i=1}^{n-2}m^{n-1-i}a_i+p(m-1)a_{n-1},\] that is,
\begin{equation}\label{ppp1}a_{n+1}=(k+p+1)a_n+p(m-1)a_{n-1}+p(m-1)\sum_{i=1}^{n-2}m^{n-1-i}a_i.\end{equation}
Replacing $n$ by $n+1$ we obtain
\begin{equation}\label{ppp2}a_{n+2}=(k+p+1)a_{n+1}+p(m-1)a_{n}+pm(m-1)\sum_{i=1}^{n-1}m^{n-1-i}a_i.\end{equation}
Subtracting (\ref{ppp1}) multiplied by $m$ from (\ref{ppp2}) we obtain
\[a_{n+2}=(k+p+m+1)a_{n+1}-(p+m+mk)a_n.\]
\end{proof}
Some particular cases of the preceding proposition follow.
\begin{corollary}
\enumerate
\item[(i)] If $k=0$ then \[c(n,\mathbf b)=p(m+p)^{n-1}.\]
\item[(ii)] If $k=1,m=2,p=1$ then
\[c(1,\mathbf b),\;c(2,\mathbf b)=7,\]
\[c(n,\mathbf b)=5c(n-1,\mathbf b)-5c(n-2,\mathbf b),\;(n>2).\]
\item[(iii)]If $k=-1,m=2,p=1$ then
\[c(n,\mathbf b)=F_{2n-2}.\]

\begin{proof} In the case $k=0$ we have
\[c(1,\mathbf b)=p,\;c(2,\mathbf b)=p(m+p),\]
\[c(n,\mathbf b)=(m+p+1)c(n-1,\mathbf b)-(m+p)c(n-1,\mathbf b),\;(n>2).\]
The roots of the characteristic equation are $\alpha=1,\;\beta=m+p.$
Solving the system
\[c_1\alpha+c_2\beta=p,\;c_1\alpha^2+c_2\beta^2=p(m+p),\] yields
$c_1=0,\;c_2=\frac{p}{m+p},$ and the assertion $(i)$ is true.

In the case $(ii)$ we have
\[c(1,\mathbf b)=2,\;c(2,\mathbf b)=7,\]
\[c(n,\mathbf b)=5c(n-1,\mathbf b)-5c(n-2,\mathbf b),\;(n>2).\]

Finally, in the case $(iii)$ we have
\[c(1,\mathbf b)=0,\;c(2,\mathbf b)=1,\]
\[c(n,\mathbf b)=3c(n-1,\mathbf b)-c(n-2,\mathbf b),\;(n>2).\]
The assertion follows from Identity 7 in \cite{bk}.
\end{proof}
\end{corollary}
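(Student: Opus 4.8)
The plan is to derive all three cases as specializations of the preceding Proposition. For each choice of $(k,m,p)$ I would substitute into the closed forms for $c(1,\mathbf b)$ and $c(2,\mathbf b)$ and into the recurrence coefficients $k+m+p+1$ and $-(km+m+p)$ to obtain a concrete three-term recursion with explicit initial data. Once the recursion is in hand, each part is finished either by solving its characteristic equation (cases (i) and (ii)) or by matching it against a known recurrence (case (iii)).

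For case (i), setting $k=0$ gives $c(1,\mathbf b)=p$ and $c(2,\mathbf b)=p(m+p)$, while the recursion becomes $c(n,\mathbf b)=(m+p+1)c(n-1,\mathbf b)-(m+p)c(n-2,\mathbf b)$. I would observe that its characteristic polynomial factors as $(x-1)\bigl(x-(m+p)\bigr)$, so the general solution has the shape $A+B(m+p)^n$. Imposing the two initial conditions forces $A=0$ and $B=\frac{p}{m+p}$, which yields the asserted formula $c(n,\mathbf b)=p(m+p)^{n-1}$.

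Cases (ii) and (iii) are even more direct. For $(k,m,p)=(1,2,1)$ the Proposition gives $c(1,\mathbf b)=2$, $c(2,\mathbf b)=7$, and coefficients $k+m+p+1=5$ and $-(km+m+p)=-5$, so the stated recursion $c(n,\mathbf b)=5c(n-1,\mathbf b)-5c(n-2,\mathbf b)$ is simply the specialization itself. For $(k,m,p)=(-1,2,1)$ the same substitution produces $c(1,\mathbf b)=0$, $c(2,\mathbf b)=1$, and the recursion $c(n,\mathbf b)=3c(n-1,\mathbf b)-c(n-2,\mathbf b)$.

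The only step with any content is recognizing, in case (iii), that $3c(n-1,\mathbf b)-c(n-2,\mathbf b)$ is exactly the three-term recurrence satisfied by the even-indexed Fibonacci numbers, namely $F_{2n-2}=3F_{2n-4}-F_{2n-6}$ (Identity 7 of \cite{bk}). Since $F_0=0$ and $F_2=1$ match the initial values computed above, a straightforward induction then gives $c(n,\mathbf b)=F_{2n-2}$. I expect this Fibonacci identification to be the main (and only real) obstacle; everything else is mechanical substitution into the Proposition.
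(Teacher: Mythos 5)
Your proposal is correct and follows essentially the same route as the paper's own proof: mechanical specialization of the preceding proposition, solving the characteristic equation with roots $1$ and $m+p$ to get $c(n,\mathbf b)=p(m+p)^{n-1}$ in case (i), and in case (iii) matching the recursion $c(n,\mathbf b)=3c(n-1,\mathbf b)-c(n-2,\mathbf b)$ with initial values $F_0=0$, $F_2=1$ against Identity 7 of \cite{bk} by induction. Nothing is missing; you even silently correct the paper's typo in the proposition, where the second term of the recurrence should read $c(n-2,\mathbf b)$ rather than $c(n-1,\mathbf b)$.
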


\begin{remark}

We state sequences in OEIS defined with $k=0,\;p=1,$ and $m$ ranges  from $2$ to $39.$

\noindent
{A000244},
{A000302},
{A000351},
{A000400},
{A000420},
{A001018},
{A001019},
{A011557},
{A001020},
{A001021},
{A001022}.
{A001023},
{A001024},
{A001025},
{A001026},
{A001027},
{A001029},
{A009964},
{A009965},
{A009966},
{A009967},
{A009968},
{A009969},
{A009970},
{A009971},
{A009972},
{A009973},
{A009974},
{A009975},
{A009976},
{A009977},
{A009978},
{A009979},
{A009980},
{A009981},
{A009982},
{A009983},
{A009984}.

More sequences follow

\begin{tabular}{ccc}
k=0, m=2, p=3,  {A005053};&
k=0, m=2, p=2, {A081294};&
k=1, m=2, p=1, {A052936};\\
k=1, m=3, p=1, {A034999};&
k=0, m=2, p=4, { A067411};&
k=0, m=3, p=2, {A020729};\\
k=0, m=4, p=2, {A167747};&
k=0, m=4, p=3, {A169634};&
k=0, m=4, p=5, {A067403};\\
k=0, m=4, p=6, {A090019};&
k=0, m=5, p=2, {A109808};&
k=0, m=5, p=3, {A103333};\\
k=0, m=6, p=2, {A013730};&
k=0, m=6, p=3, {A013708};&
k=0, m=6, p=4, {A093141};\\
k=2, m=2, p=1, {A163606};&
k=-1, m=2, p=1, {A001906};&
k=-2, m=2, p=1, {A001333};\\
k=-1, m=3, p=1, {A052530}.
\end{tabular}
\end{remark}

\section{Floor and Ceil Functions } In this section we derive two results when $b_i$ is a floor, and a ceil function of $i.$
\begin{proposition} Let $n$ be a positive integer, and $b_i=\lfloor\frac i2\rfloor,\;(i=1,2,\ldots).$ Then
  \[c(1,\mathbf b)=0,\;c(2,\mathbf b)=1,\;c(3,\mathbf b)=1,\]
  \[c(n,\mathbf b)=c(n-1,\mathbf b)+2c(n-2,\mathbf b)-c(n-3,\mathbf b),\;(n>3).\] \end{proposition}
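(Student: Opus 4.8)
The plan is to follow the same elimination strategy used in the previous propositions: first pin down the initial values directly from (\ref{e1}), then reduce the convolution to a relation containing a single partial sum, and finally shift the index and subtract to cancel that sum.

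First I would read off the base cases straight from (\ref{e1}). Since $b_1=\lfloor 1/2\rfloor=0$, $b_2=\lfloor 2/2\rfloor=1$, and $b_3=\lfloor 3/2\rfloor=1$, direct substitution gives $a_2=b_1a_1=0$, $a_3=b_2a_1+b_1a_2=1$, and $a_4=b_3a_1+b_2a_2+b_1a_3=1$, which are exactly $c(1,\mathbf b)$, $c(2,\mathbf b)$, and $c(3,\mathbf b)$.

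The crucial observation is the arithmetic identity $\lfloor (i+2)/2\rfloor=\lfloor i/2\rfloor+1$, that is, $b_{i+2}=b_i+1$ for all $i\geq 1$. Writing $a_{n+1}=\sum_{i=1}^n b_{n+1-i}a_i$ and splitting off the two terms $i=n$ and $i=n-1$ (which carry the coefficients $b_1=0$ and $b_2=1$), I would rewrite each remaining coefficient $b_{n+1-i}$ for $1\le i\le n-2$ as $b_{n-1-i}+1$. Recognizing $\sum_{i=1}^{n-2}b_{n-1-i}a_i$ as $a_{n-1}$ then collapses the expression to the auxiliary relation
\[a_{n+1}=2a_{n-1}+\sum_{i=1}^{n-2}a_i,\qquad (n\geq 3).\]
Replacing $n$ by $n+1$ gives $a_{n+2}=2a_n+\sum_{i=1}^{n-1}a_i$, and subtracting the two identities eliminates the partial sum, leaving
\[a_{n+2}=a_{n+1}+2a_n-a_{n-1}.\]
Since $c(n,\mathbf b)=a_{n+1}$, this is precisely $c(n,\mathbf b)=c(n-1,\mathbf b)+2c(n-2,\mathbf b)-c(n-3,\mathbf b)$ for $n>3$.

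I expect the only real obstacle to be bookkeeping around the floor function: I must check that $b_{i+2}=b_i+1$ is invoked only for indices $i\ge 1$, so that the range $1\le i\le n-2$ forces $n\ge 3$ in the auxiliary relation and hence $n>3$ once the two shifted relations are subtracted. As a cross-check I would confirm the first few values (for instance $a_5=3$ and $a_6=4$) both from the definition and from the derived recurrence; a generating-function computation, using $A(x)=x/(1-B(x))$ with $B(x)=x^2/[(1-x)^2(1+x)]$, produces the denominator $1-x-2x^2+x^3$ and thereby independently confirms the coefficients of the recurrence.
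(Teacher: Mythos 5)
Your proof is correct, and it reaches the paper's key intermediate identity by a noticeably cleaner route. The paper compares $a_{n+1}$ with $a_n$, i.e.\ shifts the floor argument by $1$; since $\lfloor\frac{m+1}{2}\rfloor-\lfloor\frac{m}{2}\rfloor$ oscillates, this produces an auxiliary relation carrying the parity-indicator sum $\sum_{i=1}^{n-2}\frac{(-1)^{n-i+1}+1}{2}a_i$ (equation (\ref{f1})), which then requires a second elimination round — with a sign flip in the shifted copy — to arrive at $a_{n+2}=2a_n+\sum_{i=1}^{n-1}a_i$ (equation (\ref{f3})). You instead shift by $2$, exploiting the exact identity $b_{i+2}=b_i+1$, so the convolution collapses in a single pass (recognizing $\sum_{i=1}^{n-2}b_{n-1-i}a_i=a_{n-1}$ from the defining recurrence) to $a_{n+1}=2a_{n-1}+\sum_{i=1}^{n-2}a_i$, which is exactly the paper's (\ref{f3}) reindexed. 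From there your final shift-and-subtract step coincides with the paper's. What your version buys is the elimination of all parity bookkeeping, one fewer round of subtraction, and a cleaner audit of index ranges (your observation that $b_{i+2}=b_i+1$ is used only for $i\geq 1$, forcing $n\geq 3$ and hence $n>3$ in the final recurrence, is exactly the right check); the same two-step shift would also streamline the companion proposition for $b_i=\lceil\frac i2\rceil$, where $b_{i+2}=b_i+1$ holds as well. The generating-function verification with denominator $1-x-2x^2+x^3$ is a sound independent confirmation and is consistent with $B(x)=x^2/[(1-x)^2(1+x)]$. The paper's route, for its part, makes visible the local increment structure of the floor function, which is the device it reuses informally for the ceiling case, but as a proof of this proposition yours is tighter.
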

 \begin{proof}
 It is easy to see that
\[a_2=0,\;a_3=a_4=1.\]

 For $n>3$  we have
 \[a_{n+1}=\sum_{i=1}^{n-1}\left\lfloor\frac{ n-i+1}{2}\right\rfloor a_i=a_{n-1}+\sum_{i=1}^{n-2}\left\lfloor\frac{ n-i+1}{2}\right\rfloor a_i=\]\[=a_n+a_{n-1}+\sum_{i=1}^{n-2}\left\{\left\lfloor\frac{ n-i+1}{2}\right\rfloor -\left\lfloor\frac{ n-1-i+1}{2}\right\rfloor\right\}a_i.\]
 It follows that
 \begin{equation}\label{f1}a_{n+1}=a_n+a_{n-1}+\sum_{i=1}^{n-2}\frac{(-1)^{n-i+1}+1}{2}a_i.\end{equation}
Replacing $n$ by $n+1$ we obtain
 \begin{equation}\label{f2}a_{n+2}=a_{n+1}+a_{n}-\sum_{i=1}^{n-1}\frac{(-1)^{n-i+1}+1}{2}a_i+\sum_{i=1}^{n-1}a_i.\end{equation}
Substituting (\ref{f1}) into (\ref{f2}) yields
 \begin{equation}\label{f3}a_{n+2}=2a_{n}+\sum_{i=1}^{n-1}a_i.\end{equation}
Replacing $n$ by $n+1$ we have
\begin{equation}\label{f4}a_{n+3}=2a_{n+1}+\sum_{i=1}^{n}a_i.\end{equation}
Subtracting (\ref{f3}) from (\ref{f4}) yields
\[a_{n+3}=a_{n+2}+2a_{n+1}-a_n.\]
 \end{proof}
 \begin{remark}
  Sequence {A006053} is generated by this function.
\end{remark}
In a similar way the following proposition may be proved:
\begin{proposition} Let $n$ be a positive integer, and $b_i=\lceil\frac i2\rceil,\;(i=1,2,\ldots).$ Then
  \[c(1,\mathbf b)=1,\;c(2,\mathbf b)=2,\;c(3,\mathbf b)=5,\]
  \[c(n,\mathbf b)=2c(n-1,\mathbf b)+c(n-2,\mathbf b)-c(n-3,\mathbf b),\;(n>3).\] \end{proposition}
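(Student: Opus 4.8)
The plan is to mimic the preceding proposition, replacing the floor function by the ceil function and carefully tracking the one resulting sign change. First I would write the defining recursion as $a_{n+1}=\sum_{i=1}^n\lceil\frac{n-i+1}{2}\rceil a_i$, recall that $c(n,\mathbf b)=a_{n+1}$, and directly evaluate $a_2=1$, $a_3=2$, $a_4=5$ from the weights $b_1=1,b_2=1,b_3=2$; these give the stated initial values $c(1,\mathbf b)=1$, $c(2,\mathbf b)=2$, $c(3,\mathbf b)=5$.

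The arithmetic core is the telescoping identity $\lceil\frac{j}{2}\rceil-\lceil\frac{j-1}{2}\rceil=\frac{1-(-1)^{j}}{2}$, the exact counterpart of $\lfloor\frac{j}{2}\rfloor-\lfloor\frac{j-1}{2}\rfloor=\frac{1+(-1)^{j}}{2}$ used in the floor case. Since now $b_1=b_2=1$, for $n>3$ the terms $i=n$ and $i=n-1$ contribute exactly $a_n$ and $a_{n-1}$; peeling these off and subtracting, term by term, the analogous expansion of $a_n$, the identity collapses the remaining sum and yields $a_{n+1}=2a_n+\sum_{i=1}^{n-2}\frac{1-(-1)^{n-i+1}}{2}a_i$, which is the ceil analogue of equation (\ref{f1}).

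Next I would replace $n$ by $n+1$ and use $(-1)^{n-i+2}=-(-1)^{n-i+1}$ to rewrite the shifted alternating weight as $\frac{1+(-1)^{n-i+1}}{2}=1-\frac{1-(-1)^{n-i+1}}{2}$; feeding the previous line back in (exactly as (\ref{f1}) is substituted into (\ref{f2})) removes the alternating sum and produces $a_{n+2}=a_{n+1}+2a_n+\sum_{i=1}^{n-1}a_i$. Shifting this relation once more and subtracting eliminates the partial sum $\sum a_i$ and gives $a_{n+3}=2a_{n+2}+a_{n+1}-a_n$. Re-indexing through $c(n,\mathbf b)=a_{n+1}$ turns this into $c(n,\mathbf b)=2c(n-1,\mathbf b)+c(n-2,\mathbf b)-c(n-3,\mathbf b)$ for $n>3$, as claimed.

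The only place demanding care is the bookkeeping of parities: the sign in the ceil difference identity is opposite to the floor one, so I expect the main risk to be a sign slip while carrying $\frac{1-(-1)^{n-i+1}}{2}$ through the index shift, together with the check that the boundary term $i=n-1$ vanishes (it carries weight $\frac{1-(-1)^{2}}{2}=0$) so that the alternating sum genuinely telescopes. Once that cancellation is verified, the two remaining index shifts and subtractions are routine and identical in spirit to the floor computation.
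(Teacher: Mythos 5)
Your proof is correct, and it is exactly the argument the paper intends: the paper prints no proof for the ceiling case, stating only that it goes ``in a similar way'' as the floor case, and your computation carries out precisely that analogy (the ceiling difference identity $\lceil\frac j2\rceil-\lceil\frac{j-1}{2}\rceil=\frac{1-(-1)^j}{2}$ replacing the floor one, then the same two shift-and-subtract steps). All intermediate identities check out, including the vanishing of the $i=n-1$ boundary term and the validity range yielding the recursion for $n>3$.
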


\begin{remark}
  Sequence {A006054} is generated by this function.
\end{remark}

\section{Fibonacci numbers }
 In this section  we prove several formulas in which the number of compositions is related with Fibonacci numbers.

Our first result extends the result from  \cite{eme}, where
compositions with two different types of $1$ are considered, as
well as some other known results about standard compositions.
\begin{proposition} Let $n$  be a positive integer, let $p,\;q$ be a nonnegative integers, and let $\mathbf b=(p,q,q,\ldots).$
Then \[c(1,\mathbf b)=p,\;c(2,\mathbf b)=p^2+q,\]
\begin{equation}\label{e2}c(n,\mathbf b)=(1+p)c(n-1,\mathbf b)+(q-p)c(n-2,\mathbf b),\;(n>2).\end{equation}
Explicitly,
\[c(n,\mathbf b)=u\alpha^n+v\beta^n,\]
where
\[\alpha=\frac{1+p+\sqrt{(p-1)^2+4q}}{2},\;\beta=\frac{1+p-\sqrt{(p-1)^2+4q}}{2},\] \[u= \frac{4q-(p-1)^2+(p-1)\sqrt{(p-1)^2+4q}}{2\sqrt{(p-1)^2+4q}},\]
\[v= \frac{4q+(p-1)^2-(p-1)\sqrt{(p-1)^2+4q}}{2\sqrt{(p-1)^2+4q}}.\]
\end{proposition}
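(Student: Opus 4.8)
The plan is to mirror the \textquotedblleft write out \eqref{e1}, then shift-and-subtract\textquotedblright\ strategy used everywhere earlier in the paper. First I would specialize \eqref{e1} to $\mathbf b=(p,q,q,\ldots)$. Since $b_1=p$ while $b_j=q$ for every $j\ge 2$, in the sum $\sum_{i=1}^n b_{n+1-i}a_i$ the coefficient $b_{n+1-i}$ equals $p$ only for $i=n$ (where $n+1-i=1$) and equals $q$ for $1\le i\le n-1$. Hence
\[a_{n+1}=p\,a_n+q\sum_{i=1}^{n-1}a_i.\]
Reading this off at $n=1,2$ with $a_1=1$ immediately gives $a_2=c(1,\mathbf b)=p$ and $a_3=c(2,\mathbf b)=q+p^2$, the stated initial values.

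Next I would eliminate the running sum. Replacing $n$ by $n+1$ gives $a_{n+2}=p\,a_{n+1}+q\sum_{i=1}^{n}a_i$, and subtracting the previous display collapses the two sums into the single term $q\,a_n$:
\[a_{n+2}=a_{n+1}+p\,a_{n+1}-p\,a_n+q\,a_n=(1+p)a_{n+1}+(q-p)a_n.\]
Translating back through $c(n,\mathbf b)=a_{n+1}$ yields exactly \eqref{e2}. I do not expect any difficulty in this part; it is the same mechanical elimination carried out in the earlier propositions.

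For the closed form I would solve the characteristic equation $x^2-(1+p)x-(q-p)=0$, whose discriminant is $(1+p)^2+4(q-p)=(p-1)^2+4q$, reproducing the stated roots $\alpha,\beta$. Writing $c(n,\mathbf b)=u\alpha^n+v\beta^n$, I would fix $u,v$ from two initial conditions. The cleanest route is to observe that $c(0,\mathbf b)=a_1=1$ and that \eqref{e2} also governs the step $c(2)=(1+p)c(1)+(q-p)c(0)$, so $u,v$ solve the small linear system
\[u+v=1,\qquad u\alpha+v\beta=p,\]
giving, with $D=\sqrt{(p-1)^2+4q}=\alpha-\beta$, the values $u=\dfrac{(p-1)+D}{2D}$ and $v=\dfrac{D-(p-1)}{2D}$. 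I expect the \emph{main obstacle} to be precisely this constant-determination step: it is very easy to slip an index by one here, so before committing to a closed form I would cross-check against the standard case $p=q=1$ (where $\alpha=2$, $\beta=0$, and the formula must return $c(n,\mathbf b)=2^{n-1}$) and against, say, $p=1,\,q=2$ (which a direct count gives as $1,3,7,\ldots$). The identity $(p-1)^2+4q=D^2$ is what one uses to simplify the coefficients, and any proposed expression for $u,v$ should be tested against these small values rather than trusted from the raw algebra.
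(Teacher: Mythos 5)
Your argument is correct and, for the recurrence part, is essentially the paper's own proof: the paper also specializes (\ref{e1}) to $a_{n+1}=pa_n+q\sum_{i=1}^{n-1}a_i$, reads off $a_2=p$, $a_3=p^2+q$, and eliminates the running sum (the paper absorbs $pa_{n-1}+q\sum_{i=1}^{n-2}a_i$ back into $a_n$ by adding and subtracting $pa_{n-1}$, rather than shifting and subtracting, but the content is identical), then passes to the characteristic equation $x^2-(1+p)x-(q-p)=0$, whose discriminant $(1+p)^2+4(q-p)=(p-1)^2+4q$ you computed correctly.

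The step you flagged as the main obstacle is exactly where the paper itself goes astray: your constants $u=\frac{p-1+D}{2D}$, $v=\frac{D-(p-1)}{2D}$ with $D=\sqrt{(p-1)^2+4q}$ are the \emph{correct} ones, and they do not agree with the expressions printed in the proposition. Your own suggested sanity check exposes this: at $p=q=1$ the printed formulas give $u=v=1$, hence $c(n,\mathbf b)=2^n$ instead of the true $2^{n-1}$, while your values $u=v=\frac12$ give $2^{n-1}$; likewise at $p=0$, $q=1$ the paper's own corollary computes $u=\frac{5-\sqrt 5}{10}$, $v=\frac{5+\sqrt 5}{10}$, which match your general formula (indeed $\frac{-1+\sqrt 5}{2\sqrt 5}=\frac{5-\sqrt 5}{10}$) but not the proposition's. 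Rationalizing your constants gives $u=\frac{4q+(p-1)^2+(p-1)D}{2\left((p-1)^2+4q\right)}$ and $v=\frac{4q+(p-1)^2-(p-1)D}{2\left((p-1)^2+4q\right)}$, so the printed statement carries a sign error on $(p-1)^2$ in the numerator of $u$ and has denominator $2D$ where $2D^2$ is required. Two minor points to tighten: your use of $c(0,\mathbf b)=1$ is legitimate, since the recurrence run backwards forces $(q-p)c(0,\mathbf b)=c(2,\mathbf b)-(1+p)c(1,\mathbf b)=q-p$, and even when $q=p$ the normalization $u+v=1$, $u\alpha+v\beta=p$ still reproduces $c(2,\mathbf b)=p^2+q$ because $\alpha,\beta$ satisfy the characteristic equation; and both you and the paper should exclude the degenerate case $(p,q)=(1,0)$, where $D=0$, the roots coincide, and the closed form $u\alpha^n+v\beta^n$ must be replaced by $(un+v)\alpha^n$.
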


\begin{proof}
In this case equation (\ref{e1}) has the form:
\[a_{n+1}=pa_n+q\sum_{i=1}^{n-1}a_{i}.\]
 We easily obtain that
 \[a_2=p,\;a_3=p^2+q.\] Next we have
    \[a_{n+1}=pa_n+qa_{n-1}-pa_{n-1}+pa_{n-1}+q\sum_{i=1}^{n-2}a_{i}=\]\[
=(1+p)a_n+(q-p)a_{n-1}.\]

The characteristic equation for this recurrence  is $x^2-(p+1)x-q+p=0.$ Solving this equation we obtain the explicit formula.
\end{proof}

In the following corollary we shall state some particular cases of this proposition.
 The first is the well-known formula for the number of all standard compositions. In the rest Fibonacci numbers are produced.
\begin{corollary}
\begin{itemize}
\item[$(i)$] If $\mathbf b=(0,1,1,\ldots)$
then $c(n,\mathbf b)=F_{n-1}.$ This is the well-known result that
says that there are $F_ {n-1}$ compositions of $n$ in
which each part is $\geq 2.$
\item[$(ii)$] If $\mathbf
b=(2,1,1,\ldots)$ then $c(n,\mathbf b)=F_{2n+1}.$ This is the
result from \cite{eme}.

\item[$(iii)$]  If $\mathbf b=(3,4,4,\ldots)$ then
\[c(n,\mathbf b)=F_{3n+1}.\]
\end{itemize}
\end{corollary}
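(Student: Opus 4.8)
The plan is to handle each of the three cases in the same way: specialize the recurrence (\ref{e2}) of the preceding proposition to the relevant values of $p$ and $q$, verify the two initial values $c(1,\mathbf b)$ and $c(2,\mathbf b)$, and then recognize the resulting three-term recurrence as one satisfied by the Fibonacci numbers taken along an arithmetic progression of indices. Since (\ref{e2}) together with two seeds determines the whole sequence, an elementary induction then forces the claimed identity in each case.

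First I would dispose of case $(i)$. Setting $p=0,\;q=1$ gives $c(1,\mathbf b)=0$ and $c(2,\mathbf b)=p^2+q=1$, while (\ref{e2}) collapses to $c(n,\mathbf b)=c(n-1,\mathbf b)+c(n-2,\mathbf b)$. As $F_0=0$ and $F_1=1$ obey the same recurrence with the same two seeds, induction yields $c(n,\mathbf b)=F_{n-1}$. The combinatorial remark is then transparent: the choice $\mathbf b=(0,1,1,\ldots)$ forbids the part $1$ and allows a single type of every larger part, so $c(n,\mathbf b)$ counts exactly the compositions of $n$ all of whose parts are $\geq 2$.

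For cases $(ii)$ and $(iii)$ I would proceed identically, the only extra ingredient being a Fibonacci gap identity. In case $(ii)$, $p=2,\;q=1$ gives seeds $c(1,\mathbf b)=2=F_3$ and $c(2,\mathbf b)=5=F_5$, and (\ref{e2}) becomes $c(n,\mathbf b)=3c(n-1,\mathbf b)-c(n-2,\mathbf b)$, precisely the recurrence for the odd-indexed Fibonacci numbers, whence $c(n,\mathbf b)=F_{2n+1}$. In case $(iii)$, $p=3,\;q=4$ gives $c(1,\mathbf b)=3=F_4$ and $c(2,\mathbf b)=13=F_7$, and (\ref{e2}) becomes $c(n,\mathbf b)=4c(n-1,\mathbf b)+c(n-2,\mathbf b)$, which is the recurrence satisfied by Fibonacci numbers whose indices run through an arithmetic progression of step $3$, giving $c(n,\mathbf b)=F_{3n+1}$.

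The only point needing genuine justification is that $F_{2n+1}$ and $F_{3n+1}$ really satisfy those two recurrences, that is, the identities $F_{m+2}=3F_m-F_{m-2}$ (already invoked as Identity 7 of \cite{bk} earlier in the paper) and $F_{m+3}=4F_m+F_{m-3}$. Both are instances of the standard step-$r$ relation $F_{m+r}=L_r F_m-(-1)^r F_{m-r}$, where $L_r$ denotes the $r$-th Lucas number, using $L_2=3$ and $L_3=4$; alternatively, each follows from the addition formula $F_{a+b}=F_{a+1}F_b+F_a F_{b-1}$ by a short direct computation, or simply by induction. I expect this verification of the gap recurrences to be the main, and essentially the only, obstacle, everything else reducing to a routine matching of initial conditions against the recurrence (\ref{e2}).
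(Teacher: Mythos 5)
Your proof is correct, and for parts $(ii)$ and $(iii)$ it follows essentially the paper's own route: specialize the recurrence (\ref{e2}), match the two seeds, and close an induction using the gap identities $F_{m+2}=3F_m-F_{m-2}$ and $F_{m+3}=4F_m+F_{m-3}$ (the paper cites Identity 17 of \cite{bk} for the first and states the second in the reindexed form $F_{k+2}=4F_{k-1}+F_{k-4}$). The genuine divergence is in part $(i)$: there the paper does not argue via the recurrence at all, but instead evaluates the closed formula of the preceding proposition, computing $\alpha=\frac{1+\sqrt 5}{2}$, $\beta=\frac{1-\sqrt 5}{2}$, $u=\frac{5-\sqrt 5}{10}$, $v=\frac{5+\sqrt 5}{10}$ and invoking Binet's formula, whereas you simply observe that $p=0,\;q=1$ turns (\ref{e2}) into the Fibonacci recurrence with seeds $0,1$ and induct. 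Your treatment is more elementary and makes all three cases uniform; the paper's choice for $(i)$ instead exercises the explicit formula it has just derived, at the cost of the Binet computation. A further small improvement on your side is that you actually justify the gap identities, via the Lucas-step relation $F_{m+r}=L_rF_m-(-1)^rF_{m-r}$ with $L_2=3$, $L_3=4$ (or the addition formula $F_{a+b}=F_{a+1}F_b+F_aF_{b-1}$), where the paper merely asserts that the second identity ``is easy to prove''; your verification is sound, and the seed checks $c(1,\mathbf b)=3=F_4$, $c(2,\mathbf b)=13=F_7$ are exactly what the induction needs.
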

\begin{proof}

(i). In this case we have  $p=0,\;q=1.$ It follows that
\[\alpha=\frac{1+\sqrt 5}{2},\;\beta=\frac{1-\sqrt 5}{2},
u=\frac{5-\sqrt 5}{10}, \;v=\frac{5+\sqrt 5}{10},\] and the assertion follows from Binet formula.

 (ii). In this case we have $p=2,\;q=1$ and  the recurrence relation has the form
 \[a_2=2,\;a_3=5,\;a_{n+1}=3a_n-a_{n-1}.\]
 The assertion follows by induction using Identity 17 from \cite{bk}.

(iii). The recurrence equation in this case has the form
\[a_{n+1}=4a_n+a_{n-1}.\] It is easy to prove that for Fibonacci numbers the following identity holds
 \[F_{k+2}=4F_{k-1}+F_{k-4}.\]

Using induction and this identity we conclude that the assertion holds.
\end{proof}

\begin{remark} We state several sequences from OEIS which are generated by (\ref{e2}).

\[\begin{tabular}{lll}
p=2, q=1,{ A001519};&p=3, q=1 {A007052};&p=4, q=1, {A018902};\\
p=5, q=1, {A018903};&p=6, q=1, {A018904};&p=1, q=2, {A001333};\\
p=1, q=3, {A026150};&p=1, q=4, {A046717};&p=1, q=5, {A084057};\\
p=1, q=6, {A002533};&p=1, q=7, {A083098};&p=1, q=8, {A083100};\\
p=1 ,q=9, {A003665};&p=1, q=10, {A002535};&p=1, q=11, {A083101};\\
p=1, q=12, {A090042};&p=1, q=13, {A125816};&p=1, q=14, {A133343};\\
p=1, q=15, {A133345};&p=1, q=16, {A120612};&p=1, q=17, {A133356};\\
p=1, q=18, {A125818};&p=2, q=3, {A052924};&p=2, q=4, {A104934};\\
p=2, q=6, {A122117};&p=3, q=2, {A001835};&p=3, q=4, {A033887};\\
p=3, q=6, {A122558};&p=3, q=8, {A083217};&p=3, q=9, {A147518};\\
p=4, q=2, {A052913};&p=4, q=3, {A004253};&p=4, q=5, {A100237};\\
p=5, q=2, {A158869};&p=5, q=4, {A001653}.&
\end{tabular}\]
\end{remark}

The next result also generalizes a classical result for standard
compositions.
\begin{proposition} If $\mathbf b=(p,1,0,0,\ldots)$ where $m$ is a positive integer then
\[c(n,\mathbf b)=\sum_{i=0}^{\lfloor\frac{n}{2}\rfloor}{n-i\choose i}p^{n-2i}.\]

\end{proposition}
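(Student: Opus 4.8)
The plan is to first collapse the defining recurrence (\ref{e1}) to a two-term recurrence and then verify the stated closed form by induction. Substituting $b_1=p$, $b_2=1$, and $b_i=0$ for $i\geq 3$ into $a_{n+1}=\sum_{i=1}^{n}b_{n+1-i}a_i$, every summand whose index exceeds $2$ vanishes, so for $n\geq 2$ the sum reduces to its first two terms, $a_{n+1}=pa_n+a_{n-1}$. Since $c(n,\mathbf b)=a_{n+1}$, this reads $c(n,\mathbf b)=p\,c(n-1,\mathbf b)+c(n-2,\mathbf b)$ for $n\geq 2$, with $c(1,\mathbf b)=a_2=p$ and $c(2,\mathbf b)=a_3=p^2+1$ obtained directly from (\ref{e1}); equivalently one may record the base values $c(0,\mathbf b)=a_1=1$ and $c(1,\mathbf b)=p$.

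Next I would set $S(n)=\sum_{i=0}^{\lfloor n/2\rfloor}\binom{n-i}{i}p^{n-2i}$ and show it satisfies the same recurrence and initial data. A direct check gives $S(0)=1$, $S(1)=p$, and $S(2)=p^2+1$, matching the base values. For the recurrence, fix the convention $\binom{a}{b}=0$ when $b<0$ or $b>a$ so that the summation limits need no special handling, and apply Pascal's rule $\binom{n-i}{i}=\binom{n-1-i}{i}+\binom{n-1-i}{i-1}$ termwise. Factoring one power of $p$ from the first resulting sum yields $p\sum_i\binom{(n-1)-i}{i}p^{(n-1)-2i}=p\,S(n-1)$, while the substitution $j=i-1$ turns the second sum into $\sum_j\binom{(n-2)-j}{j}p^{(n-2)-2j}=S(n-2)$. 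Hence $S(n)=p\,S(n-1)+S(n-2)$, and since $S$ and $c(\cdot,\mathbf b)$ agree at $n=0,1$ and obey the same second-order recurrence, they coincide for all $n$.

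There is no serious obstacle; the only delicate point is the index bookkeeping when Pascal's rule is applied and the summation is reindexed, which is precisely why I would fix the vanishing convention for out-of-range binomial coefficients at the outset. As an independent and more conceptual route, I would also note the combinatorial reading of the identity: a composition counted by $c(n,\mathbf b)$ uses only parts equal to $1$ (available in $p$ colors) and parts equal to $2$; one containing exactly $i$ parts equal to $2$ has $n-i$ parts in all, the positions of the $2$'s can be chosen in $\binom{n-i}{i}$ ways, and the remaining $n-2i$ unit parts can be colored in $p^{n-2i}$ ways, so summing over $i$ reproduces the formula directly and gives a bijective proof.
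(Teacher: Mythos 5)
Your proposal is correct, and it overlaps with the paper's proof only in its first step. The paper also collapses the recurrence to $c(n,\mathbf b)=p\,c(n-1,\mathbf b)+c(n-2,\mathbf b)$, but it does so by recognizing this as the defining recurrence of the Fibonacci polynomials, identifying $c(n,\mathbf b)=F_{n+1}(p)$, and then simply citing the ``well-known formula'' $F_{n+1}(x)=\sum_{i\ge 0}\binom{n-i}{i}x^{n-2i}$ without proof. You instead prove that closed form from scratch: you verify the base values $S(0)=1$, $S(1)=p$, $S(2)=p^2+1$ and show $S(n)=pS(n-1)+S(n-2)$ by a termwise application of Pascal's rule with the standard vanishing convention for out-of-range binomial coefficients, which makes the argument self-contained where the paper outsources the key identity to the literature. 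Your closing bijective argument is a genuinely different route that the paper does not take at all: since $b_i=0$ for $i\ge 3$, the objects counted are exactly compositions of $n$ into parts $1$ (in $p$ colors) and parts $2$; fixing the number $i$ of $2$'s gives $n-i$ parts, $\binom{n-i}{i}$ placements, and $p^{n-2i}$ colorings, which yields the formula with no recurrence at all and explains combinatorially why the Fibonacci-polynomial coefficients appear. The paper's proof buys brevity and the explicit link to Fibonacci polynomials (which it exploits in the ensuing corollary for $p=1$); yours buys completeness and a direct counting interpretation, arguably closer to the compositional spirit of the paper. All your index bookkeeping checks out, including the reduction of (\ref{e1}) to two terms for $n\ge 2$.
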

\begin{proof}
The recurrence relation for Fibonacci polynomials is
\[F_{n+1}(x)=xF_n(x)+F_{n-1}(x).\] It follows that for a positive
integer $p$ we have $F_{n+1}(p)=c(n,\mathbf b).$ The required
equation follows from the well-known formula for Fibonacci
polynomials.

\end{proof}
As an immediate consequence we obtain the following well-known
result.
\begin{corollary} The number of compositions of $n$ in which each part is either $1$ or $2$ is $F_{n+1}.$
\end{corollary}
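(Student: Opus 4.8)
The plan is to deduce the corollary directly from the preceding proposition by taking $p=1$. A composition of $n$ in which every part equals $1$ or $2$ is exactly a generalized composition with $\mathbf b=(1,1,0,0,\ldots)$: there is one type of $1$, one type of $2$, and no available parts of size $\geq 3$. Substituting $p=1$ into the formula $c(n,\mathbf b)=\sum_{i=0}^{\lfloor n/2\rfloor}{n-i\choose i}p^{n-2i}$ of that proposition makes every power of $p$ equal to $1$ and yields
\[c(n,\mathbf b)=\sum_{i=0}^{\lfloor n/2\rfloor}{n-i\choose i}.\]

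It then remains only to recognize this quantity as $F_{n+1}$. I would invoke the classical shallow-diagonal identity $\sum_{i=0}^{\lfloor n/2\rfloor}{n-i\choose i}=F_{n+1}$, which is itself proved by a one-line induction: writing $S(n)$ for the sum and applying Pascal's rule ${n-i\choose i}={n-1-i\choose i}+{n-1-i\choose i-1}$, the two resulting families reassemble into $S(n-1)$ and $S(n-2)$, so $S(n)=S(n-1)+S(n-2)$; since $S(0)=S(1)=1$, the Fibonacci recurrence forces $S(n)=F_{n+1}$.

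There is no real obstacle in this argument, as everything reduces to a result proved earlier together with a standard identity. The only mild point of care is tracking the floor bounds when applying Pascal's rule, since the top term behaves slightly differently according to the parity of $n$; this bookkeeping is routine. If one prefers to avoid the binomial identity altogether, an even shorter route is available: with $b_1=b_2=1$ and $b_i=0$ for $i\geq 3$, the basic recursion (\ref{e1}) collapses to $c(n,\mathbf b)=c(n-1,\mathbf b)+c(n-2,\mathbf b)$, and checking $c(1,\mathbf b)=1$ and $c(2,\mathbf b)=2$ identifies $c(n,\mathbf b)$ with $F_{n+1}$ by the Fibonacci recurrence.
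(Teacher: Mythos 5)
Your proposal is correct and follows essentially the same route as the paper: the paper's proof is precisely ``take $p=1$ in the preceding proposition and apply Identity 4 in \cite{bk},'' which is the shallow-diagonal identity $\sum_{i=0}^{\lfloor n/2\rfloor}\binom{n-i}{i}=F_{n+1}$ that you state and prove by induction rather than cite. Your alternative argument via the basic recursion (\ref{e1}), giving $c(n,\mathbf b)=c(n-1,\mathbf b)+c(n-2,\mathbf b)$ directly, is also valid and even more elementary, but the main line of your proof coincides with the paper's.
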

\begin{proof} Take $p=1$ in the preceding proposition and  apply Identity 4 in \cite{bk}.
\end{proof}

\begin{remark}
As before, we state a few sequences from OEIS obtained for different values
of $p.$

\begin{tabular}{lll} p=3,\; {A000129}. Pell numbers,& p=4,\;{A006190},& p=5,\; {A001076},\\
p=6,\; {A052918},&p=7,\; {A054413}.&
\end{tabular}
\end{remark}

The following result also extends a well-known result for standard compositions.

\begin{proposition}
Let $n$ be a positive integer, let $p,\;q$ be nonnegative integers, and let  $\mathbf b=(p,q,p,q,\ldots).$
Then \[ c(1,\mathbf b)=p,\;c(2,\mathbf b)=p^2+q,\]\[c(n,\mathbf b)=pc(n-1,\mathbf b)+
(1+q)c(n-2,\mathbf b),\;(n>2).\]
\end{proposition}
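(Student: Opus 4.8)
The plan is to work directly with the defining recursion (\ref{e1}) and exploit the fact that $\mathbf b=(p,q,p,q,\ldots)$ is periodic with period two, so that $b_j=p$ when $j$ is odd and $b_j=q$ when $j$ is even. First I would record the base cases: since $c(n,\mathbf b)=a_{n+1}$, equation (\ref{e1}) gives $a_2=b_1a_1=p$ and $a_3=b_2a_1+b_1a_2=q+p^2$, which are exactly the claimed values of $c(1,\mathbf b)$ and $c(2,\mathbf b)$.

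For the recurrence I would fix $n>2$ and peel off the two leading terms of the sum defining $a_{n+1}$. Writing
\[a_{n+1}=\sum_{i=1}^n b_{n+1-i}a_i=b_1a_n+b_2a_{n-1}+\sum_{i=1}^{n-2}b_{n+1-i}a_i=pa_n+qa_{n-1}+\sum_{i=1}^{n-2}b_{n+1-i}a_i,\]
the whole point becomes the evaluation of the tail sum. Here the period-two structure enters: for every index $i$ in the range $1\le i\le n-2$ we have $n+1-i\ge 3$, and since $\mathbf b$ has period $2$ it follows that $b_{n+1-i}=b_{n-1-i}$. Hence the tail equals $\sum_{i=1}^{n-2}b_{n-1-i}a_i$, and by (\ref{e1}), read as the definition of $a_{n-1}$, this tail is precisely $a_{n-1}$.

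Combining the two displays I would then obtain
\[a_{n+1}=pa_n+qa_{n-1}+a_{n-1}=pa_n+(1+q)a_{n-1},\]
which, rewritten in terms of $c(n,\mathbf b)=a_{n+1}$, is exactly the asserted three-term recurrence $c(n,\mathbf b)=pc(n-1,\mathbf b)+(1+q)c(n-2,\mathbf b)$ for $n>2$.

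Unlike the propositions in the earlier sections, here no ``replace $n$ by $n+1$ and subtract'' manoeuvre is needed, because the periodicity collapses the tail sum to a single earlier term in one step; the only delicate point is the index bookkeeping, namely checking that the parities match so that $b_{n+1-i}=b_{n-1-i}$ throughout the summation range and that the resulting sum is genuinely the defining sum for $a_{n-1}$ rather than a shifted variant. One can sanity-check the necessity of the hypothesis $n>2$ by noting that at $n=2$ the peeling leaves an empty tail and the naive relation would wrongly read $p^2+1+q$ instead of the correct $a_3=p^2+q$.
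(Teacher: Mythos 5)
Your proof is correct and takes essentially the same route as the paper: both peel off the leading terms $pa_n+qa_{n-1}$ and identify the remaining tail of the defining sum for $a_{n+1}$ as exactly the defining sum for $a_{n-1}$. The only difference is presentational --- the paper verifies this in two separate parity cases ($a_{2m+1}$ and $a_{2m}$) by grouping the $p$- and $q$-terms explicitly, whereas your shift $b_{n+1-i}=b_{n-1-i}$, valid by periodicity throughout the range $1\le i\le n-2$, handles both cases uniformly.
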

\begin{proof}
In this case we first have \[a_{2n+1}=\sum_{i=1}^{2n}b_{2n-i+1}a_i.\]
Hence,
\[a_{2m+1}=q(a_1+a_3+\cdots+a_{2m-1})+p(a_2+a_4+\cdots+a_{2m})=\]
\[=qa_{2m-1}+pa_{2m}+a_{2m-1}=pa_{2m}+(1+q)a_{2m-1},\] and the assertion is true for even $n.$
Also,
\[a_{2m}=\sum_{i=1}^{2m-1}b_{2m-i}a_i,\] that is
\[a_{2m}=p(a_1+a_3+\cdots+a_{2m-1})+q(a_2+a_4+\cdots+a_{2m-2})=\]
\[=pa_{2m-1}+qa_{2m-2}+a_{2m-2}=pa_{2m-1}+(1+q)a_{2m-2}.\] Hence, the assertion is also true for odd $n.$
\end{proof}
\begin{corollary} Let $n$ be a positive integer, and let $\mathbf b=(1,0,1,0,\ldots).$ Then
\[c(n,\mathbf b)=F_n.\] In other word, $F_n$ is the number of
compositions of $n$ in which all parts are odd.
\end{corollary}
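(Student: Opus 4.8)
The plan is to specialize the preceding proposition to the case $\mathbf b=(1,0,1,0,\ldots)$, that is, $p=1$ and $q=0$, and then identify the resulting recurrence as the defining recurrence for Fibonacci numbers. First I would substitute these values into the statement of the immediately preceding proposition. The initial conditions become $c(1,\mathbf b)=p=1$ and $c(2,\mathbf b)=p^2+q=1$, and the recurrence $c(n,\mathbf b)=p\,c(n-1,\mathbf b)+(1+q)\,c(n-2,\mathbf b)$ collapses to
\[
c(n,\mathbf b)=c(n-1,\mathbf b)+c(n-2,\mathbf b),\qquad(n>2).
\]
This is precisely the Fibonacci recurrence, and with $c(1,\mathbf b)=c(2,\mathbf b)=1$ the values match $F_1=F_2=1$ under the convention that $F_1=F_2=1$. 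A one-line induction then yields $c(n,\mathbf b)=F_n$ for all positive integers $n$.

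For the combinatorial interpretation I would argue directly from the definition of generalized compositions. The sequence $\mathbf b=(1,0,1,0,\ldots)$ prescribes that each odd number $2j-1$ comes in exactly one type while each even number $2j$ comes in zero types, i.e.\ is forbidden as a part. Hence $c(n,\mathbf b)$ counts ordinary compositions of $n$ all of whose parts are odd. Identifying this count with $F_n$ recovers the classical result that the number of compositions of $n$ into odd parts is the $n$th Fibonacci number. I would note that this also dovetails with the proposition just above on $\mathbf b=(p,1,0,0,\ldots)$ and with the corollary giving compositions into parts $1$ and $2$, both instances of Fibonacci-counted composition classes.

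The verification is essentially immediate once the substitution is made, so there is no serious obstacle; the only point requiring a word of care is the indexing convention for $F_n$. I would confirm that the chosen convention ($F_1=F_2=1$, so the sequence reads $1,1,2,3,5,\ldots$) is the one under which the initial data $c(1,\mathbf b)=c(2,\mathbf b)=1$ align with $F_1,F_2$; under any other common convention (for instance $F_0=0$, $F_1=1$) one must check that the offset is zero so that $c(n,\mathbf b)=F_n$ holds exactly rather than with a shift. Since the paper elsewhere uses $F$ with results such as $c(n,\mathbf b)=F_{n-1}$ for compositions into parts $\ge 2$, I would make the convention explicit to keep the indexing consistent across the corollaries.
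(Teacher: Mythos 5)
Your proposal is correct and follows exactly the paper's route: the paper's proof likewise just sets $p=1$, $q=0$ in the preceding proposition so that the recurrence $c(n,\mathbf b)=p\,c(n-1,\mathbf b)+(1+q)\,c(n-2,\mathbf b)$ with $c(1,\mathbf b)=c(2,\mathbf b)=1$ becomes the Fibonacci recurrence. Your added remarks on the indexing convention and the combinatorial reading of $b_{2j}=0$ as forbidding even parts are sound elaborations of what the paper leaves implicit, but they do not change the argument.
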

\begin{proof}
Since $p=1,\;q=0$ the recurrence from the preceding propositions becomes recurrence relation for Fibonacci numbers.
\end{proof}

\begin{remark}
The following sequences from OEIS are generated by the formula from this proposition.

p=1, q=2, {A105476}, p=2,q=1, {A052945}, p=2,q=3, {A162770}.

\end{remark}

In the rest of this section Fibonacci numbers play the role of the $b$'s.

We shall prove that there are a closed formula for $c(n,\mathbf b)$ in the case when \[b_{i}=F_{m+k(i-1)},\;(i=1,2,\ldots,n)\] where $m\geq -1$ and $k\geq 0$ are  arbitrary integers.  For this we need the following identities for Fibonacci numbers.
\begin{lemma} Let $m\geq -1,\;k\geq 0$ be integers. Then
\begin{equation}\label{pp1}F_{m+2k}+F_{m-2k}=F_{m}(F_{2k-1}+F_{2k+1}).\end{equation}
Also,
\begin{equation}\label{pp2}F_{m+2k-1}-F_{m-2k+1}=F_{m}(F_{2k-2}+F_{2k}).\end{equation}
\end{lemma}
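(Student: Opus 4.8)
The plan is to prove both identities using the Binet formula, since each is a linear relation among Fibonacci numbers with constant (in $m$) coefficients, and such identities are most transparently verified by substituting $F_j = (\varphi^j - \psi^j)/\sqrt{5}$, where $\varphi = (1+\sqrt5)/2$ and $\psi = (1-\sqrt5)/2$, with $\varphi\psi = -1$. First I would record the two facts I expect to use repeatedly: that $\varphi^{-1} = -\psi$ and $\psi^{-1} = -\varphi$ (equivalently $\varphi\psi = -1$), and the Lucas-number expressions $L_j = \varphi^j + \psi^j$, so that $F_{2k-1}+F_{2k+1} = L_{2k}$ and $F_{2k-2}+F_{2k} = L_{2k-1}$. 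Rewriting both right-hand sides in terms of Lucas numbers turns (\ref{pp1}) into the familiar shape $F_{m+2k}+F_{m-2k} = F_m L_{2k}$ and (\ref{pp2}) into $F_{m+2k-1}-F_{m-2k+1} = F_m L_{2k-1}$, which are standard product-to-sum identities; this reformulation is the conceptual heart of the argument.

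For (\ref{pp1}), I would expand the left side as
\[
F_{m+2k}+F_{m-2k}=\frac{\varphi^{m+2k}-\psi^{m+2k}+\varphi^{m-2k}-\psi^{m-2k}}{\sqrt5}
=\frac{\varphi^{m}(\varphi^{2k}+\varphi^{-2k})-\psi^{m}(\psi^{2k}+\psi^{-2k})}{\sqrt5}.
\]
The key simplification is that $\varphi^{-2k}=(\varphi\psi)^{-2k}\psi^{2k}=\psi^{2k}$ and likewise $\psi^{-2k}=\varphi^{2k}$, because $(\varphi\psi)^{2k}=1$. Hence the bracketed factor $\varphi^{2k}+\varphi^{-2k}$ equals $\varphi^{2k}+\psi^{2k}=L_{2k}$, and the same factor appears in both terms, so the expression collapses to $F_m L_{2k}/1 = F_m(F_{2k-1}+F_{2k+1})$, which is exactly (\ref{pp1}).

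For (\ref{pp2}) the computation is parallel, but the sign pattern differs because the exponent offset is odd. Expanding,
\[
F_{m+2k-1}-F_{m-2k+1}=\frac{\varphi^{m}\varphi^{2k-1}-\psi^{m}\psi^{2k-1}-\varphi^{m}\varphi^{-(2k-1)}+\psi^{m}\psi^{-(2k-1)}}{\sqrt5},
\]
and now $\varphi^{-(2k-1)}=(\varphi\psi)^{-(2k-1)}\psi^{2k-1}=-\psi^{2k-1}$ since $(\varphi\psi)^{2k-1}=-1$; symmetrically $\psi^{-(2k-1)}=-\varphi^{2k-1}$. Substituting, the two $\varphi^m$ terms combine as $\varphi^{2k-1}+\psi^{2k-1}$ and the two $\psi^m$ terms combine with matching sign to give the same factor, yielding $F_m(\varphi^{2k-1}+\psi^{2k-1})=F_m L_{2k-1}=F_m(F_{2k-2}+F_{2k})$, establishing (\ref{pp2}). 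The main obstacle, such as it is, lies entirely in bookkeeping the four sign cases arising from the parity of the exponent shift (even in (\ref{pp1}), odd in (\ref{pp2})); once one observes that $(\varphi\psi)^{j}=(-1)^{j}$ controls exactly these signs, both identities fall out uniformly. As an alternative I would keep in reserve a purely inductive proof on $k$ using $F_{r+1}=F_r+F_{r-1}$, which avoids irrationals but requires verifying a base case and managing a more intricate two-step recursion; I expect the Binet approach to be cleaner and would present it as the primary route.
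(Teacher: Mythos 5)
Your proof is correct, but it takes a genuinely different route from the paper. The paper proves (\ref{pp1}) by induction on $m$: it checks the base cases $m=0$, $m=\pm 1$ (using $F_{-(2k-1)}=F_{2k-1}$), then for $m\geq 2$ splits $F_{m+2k}+F_{m-2k}$ via the Fibonacci recurrence into the corresponding sums at $m-1$ and $m-2$ and applies the induction hypothesis to pull out the common factor $F_{2k-1}+F_{2k+1}$; the proof of (\ref{pp2}) is only sketched as ``similar.'' You instead recognize the right-hand coefficients as Lucas numbers, $F_{2k-1}+F_{2k+1}=L_{2k}$ and $F_{2k-2}+F_{2k}=L_{2k-1}$, and verify both identities uniformly by Binet's formula, with the sign bookkeeping reduced to $(\varphi\psi)^j=(-1)^j$; your computations (e.g.\ $\varphi^{-2k}=\psi^{2k}$ and $\varphi^{-(2k-1)}=-\psi^{2k-1}$) check out, including the edge case $k=0$. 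What your approach buys: a single two-line calculation covering both identities, validity for \emph{all} integers $m$ rather than $m\geq -1$, and a conceptual explanation of why the factor is $L_{2k}$ in the even-shift case and $L_{2k-1}$ in the odd-shift case. What the paper's induction buys: it stays entirely within integer arithmetic and the defining recurrence, matching the elementary style of the rest of the paper. One point you should make explicit rather than leave implicit: since $m-2k$ can be negative, both proofs need the negative-index extension of Fibonacci numbers, and you should state that Binet's formula with $F_j=(\varphi^j-\psi^j)/\sqrt 5$ for $j<0$ yields exactly the standard convention $F_{-n}=(-1)^{n+1}F_n$, which is the one the paper uses when it invokes $F_{-(2k-1)}=F_{2k-1}$ — otherwise the identity being proved and the identity stated could silently differ for negative indices.
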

\begin{proof}
The assertion (\ref{pp1}) is obviously true for $m=0.$  Since  $F_{-(2k-1)}=F_{2k-1}$ it is also  true for $m=1$ and $m=-1.$
Assume that it is true for $m_1$ such that $0\leq m_1<m.$
Then, for $m\geq 2$ we have
\[F_{m+2k}+F_{m-2k}=F_{m-1+2k}+F_{m-1-2k}+F_{m-2+2k}+F_{m-2-2k}.\]
Using the induction hypothesis yields
\[F_{m+2k}+F_{m-2k}=(F_{m-1}+F_{m-2})(F_{2k-1}+F_{2k+1})=F_{m}(F_{2k-1}+F_{2k+1}).\]
The assertion (\ref{pp2}) may be proved in a similar way.
\end{proof}
\begin{proposition} Let $m\geq -1$ be an integer, let $k$ be a nonnegative integer, and let  $b_i=F_{m+k(i-1)},\;(i=1,2,\ldots,n ).$
Then,
\[c(1,\mathbf b)=F_m,\;c(2,\mathbf b)=F_{m+k}+F_m^2,\]
\[c(n+1,\mathbf b)=(F_{m}+F_{k-1}+F_{k+1})c(n,\mathbf b)+(-1)^{k-1}(F_{m-k}+1)c(n-1,\mathbf b),\;(n>1).\]

\end{proposition}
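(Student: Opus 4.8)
The plan is to follow exactly the same telescoping strategy used in the earlier propositions of this section, now with $b_i = F_{m+k(i-1)}$. First I would record the base cases $c(1,\mathbf b)=a_2=b_1=F_m$ and $c(2,\mathbf b)=a_3=b_2 a_1 + b_1 a_2 = F_{m+k}+F_m^2$, which come directly from \eqref{e1}. Then I would write down the defining sum
\[a_{n+1}=\sum_{i=1}^{n}F_{m+k(n-i)}\,a_i,\]
peel off the $i=n$ term (giving $F_m a_n$), and aim to eliminate the sum by forming a suitable linear combination of $a_{n+1}$, $a_n$, and $a_{n-1}$. The key obstacle, and the reason the two lemma identities are provided, is that the remaining sum $\sum F_{m+k(n-i)}a_i$ does not telescope by a single shift the way the arithmetic-progression and exponential cases did; instead a \emph{second difference} is needed, because Fibonacci-type coefficients satisfy a two-step rather than one-step recursion in their argument.

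Concretely, I would compute the combination $a_{n+1} - (F_{k-1}+F_{k+1})a_n$ in the spirit of equations like \eqref{a1}--\eqref{a2}, \eqref{q1}--\eqref{q4}. The multiplier $F_{k-1}+F_{k+1}$ (which equals the Lucas number $L_k$) is chosen precisely so that, when the sum indexed by $i$ is shifted, the coefficient $F_{m+k(n-i)}$ combines across two consecutive values of the shifted index via a relation of the form $F_{x+2k} = L_k F_{x+k} - (-1)^k F_x$. This is the standard identity behind \eqref{pp1} and \eqref{pp2}: writing $F_{m+2k}+F_{m-2k}=F_m L_k$ and $F_{m+2k-1}-F_{m-2k+1}=F_m(F_{2k-2}+F_{2k})$ encodes exactly the coefficient-recursion needed to collapse the residual sum. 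I would therefore expect that after subtracting the shifted version (as in going from \eqref{q3} to \eqref{q4}), the sum disappears and one is left with a clean three-term recursion.

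The heart of the argument is matching the emergent constants against the claimed coefficients $F_m + F_{k-1}+F_{k+1}$ and $(-1)^{k-1}(F_{m-k}+1)$. The appearance of $F_m$ added to $L_k$ in the first coefficient reflects the peeled-off diagonal term $F_m a_n$ recombining with the $L_k$-telescoping; the sign $(-1)^{k-1}$ and the factor $F_{m-k}+1$ in the second coefficient should fall out of the Lemma identities, since $(-1)^k F_x$ is exactly the correction term in the Fibonacci multiplication formula, and $F_{m-k}$ arises from evaluating $F_{m+k(n-i)}$ at the boundary of the truncated sum. The main obstacle I anticipate is bookkeeping: keeping the index ranges of the partial sums aligned through two successive shift-and-subtract steps, and correctly applying \eqref{pp1}--\eqref{pp2} (with their parity restrictions $m\geq -1$, $k\geq 0$ and the symmetry $F_{-(2k-1)}=F_{2k-1}$) so that the boundary terms assemble into precisely $(-1)^{k-1}(F_{m-k}+1)$ rather than some algebraically equal but unsimplified expression. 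Once the telescoping is verified and the constants are identified, the recursion is established and the proof is complete.
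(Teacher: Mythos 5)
Your proposal matches the paper's proof in all essentials: the same base cases from \eqref{e1}, the same multiplier $F_{k-1}+F_{k+1}=L_k$ applied to the defining sum, the lemma identities \eqref{pp1}--\eqref{pp2} used termwise to rewrite $L_k F_{m+k(n-i)}$ as a combination of $F_{m+k(n+1-i)}$ and $F_{m+k(n-1-i)}$, and the boundary terms of the truncated sums producing exactly $F_m$ in the first coefficient and $(-1)^{k-1}(F_{m-k}+1)$ in the second. The only (cosmetic) difference is that you phrase the mechanism uniformly via $L_kF_x=F_{x+k}+(-1)^kF_{x-k}$, whereas the paper splits into the cases $k=2p$ and $k=2p-1$ and invokes \eqref{pp1} and \eqref{pp2} separately.
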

\begin{proof}
We have
\[a_1=1,\;a_{n+1}=\sum_{i=1}^nF_{m+k(n-i)}a_i.\]
For $n=1,2$ we easily  obtain
\[a_2=F_m,\;a_3=F_{m+k}+F_m^2.\]
Assume that $k$ is even and denote $k=2p.$  Then for $n>2$ we obtain
\[a_{n+1}=\sum_{i=1}^nF_{m+2p(n-i)}a_i.\]

Using (\ref{pp1}) yields
\[(F_{2p-1}+F_{2p+1})a_{n+1}=\sum_{i=1}^nF_{m+2p(n+1-i)}a_i+\sum_{i=1}^nF_{m+2p(n-1-i)}a_i=\]
\[=a_{n+2}-F_ma_{n+1}+a_n+F_{m-2k}a_n,\] and the assertion holds.

If $k$ is odd and $k=2p-1,$ then for $n>2$ we have
\[a_{n+1}=\sum_{i=1}^nF_{m+(2p-1)(n-i)}a_i.\]

Using (\ref{pp2}) yields
\[(F_{2p-2}+F_{2p})a_{n+1}=\sum_{i=1}^nF_{m+(2p-1)(n+1-i)}a_i-\sum_{i=1}^nF_{m+(2p-1)(n-1-i)}a_i=\]
\[=a_{n+2}-F_ma_{n+1}-a_n-F_{m-2p+1}a_n,\] and the assertion also holds in this case.
\end{proof}
Particulary, for $k=0$ we have
\begin{corollary} Let $m\geq -1$ be an integer, and let $\mathbf b=(F_m,F_m,\ldots).$ Then
\[c(n,\mathbf b)=F_m(1+F_m)^{n-1},\;(n=1,2,\ldots).\]
\end{corollary}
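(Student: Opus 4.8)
The plan is to observe that this corollary is the degenerate case $k=0$ of the preceding proposition, and that in this case the defining sequence collapses to a constant one. Indeed, setting $k=0$ gives $b_i=F_{m+0\cdot(i-1)}=F_m$ for every $i$, so $\mathbf b=(F_m,F_m,\ldots)$. The cleanest route is then to appeal directly to the first proposition of the paper, which asserts that for a constant sequence $\mathbf b=(p,p,\ldots)$ one has $c(n,\mathbf b)=p(1+p)^{n-1}$; substituting $p=F_m$ yields $c(n,\mathbf b)=F_m(1+F_m)^{n-1}$ at once. This makes the statement essentially a corollary of that earlier result rather than of the Fibonacci-indexed proposition, though both are available.

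If instead I wanted to derive it from the preceding proposition, so as to keep the argument self-contained within this section, I would specialize its recurrence to $k=0$. The only point requiring care is the evaluation of the Fibonacci coefficients at the small and negative indices produced by $k=0$: using the convention $F_{-1}=F_1=1$ (equivalently $F_{-n}=(-1)^{n+1}F_n$, which is the convention already invoked in the lemma via $F_{-(2k-1)}=F_{2k-1}$), I get $F_{k-1}+F_{k+1}=F_{-1}+F_1=2$ and $(-1)^{k-1}(F_{m-k}+1)=-(F_m+1)$. Hence the recurrence becomes
\[c(n+1,\mathbf b)=(F_m+2)\,c(n,\mathbf b)-(F_m+1)\,c(n-1,\mathbf b),\quad(n>1).\]

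Next I would solve this three-term constant-coefficient recurrence. Its characteristic polynomial $x^2-(F_m+2)x+(F_m+1)$ factors as $(x-1)\big(x-(F_m+1)\big)$, so the general solution has the form $c(n,\mathbf b)=A+B(1+F_m)^n$. Imposing the initial values $c(1,\mathbf b)=F_m$ and $c(2,\mathbf b)=F_m+F_m^2=F_m(1+F_m)$ supplied by the proposition gives a linear system whose solution is $A=0$ and $B=F_m/(1+F_m)$; therefore $c(n,\mathbf b)=F_m(1+F_m)^{n-1}$, as claimed.

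The argument has no genuine obstacle: the substitution $k=0$ is immediate and the recurrence is solved by inspection once it factors. The one place to remain vigilant is the negative-index convention above, since an incorrect value of $F_{-1}$ would corrupt both coefficients; everything else is routine bookkeeping, and the direct appeal to the constant-sequence proposition bypasses even that.
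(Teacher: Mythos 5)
Your proposal is correct, and its second half is in fact the paper's own argument spelled out: the paper offers no written proof beyond the phrase ``Particulary, for $k=0$ we have,'' i.e.\ it presents the corollary as the $k=0$ specialization of the preceding proposition, exactly the route you carry out, including the correct evaluations $F_{-1}+F_1=2$ (consistent with the convention $F_{-(2k-1)}=F_{2k-1}$ already used in the paper's lemma) and the factorization $x^2-(F_m+2)x+(F_m+1)=(x-1)\bigl(x-(1+F_m)\bigr)$. Your preferred first route, citing the paper's opening proposition for constant $\mathbf b=(p,p,\ldots)$ with $p=F_m$, is genuinely more direct than what the paper does and is arguably preferable: the corollary is literally an instance of that proposition, and this appeal also sidesteps the one degenerate point in your second route, namely $m=0$, where $F_m=0$ makes the two characteristic roots coincide (both equal $1$), so the ansatz $A+B(1+F_m)^n$ with distinct roots, and the cancellation of $F_m$ when solving for $B$, are not licensed; the statement is trivial there anyway, since $\mathbf b=(0,0,\ldots)$ forces $c(n,\mathbf b)=0=F_m(1+F_m)^{n-1}$, but a careful write-up of the specialization route should note that case separately. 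In short: the Proposition~1 route buys brevity and uniform validity, while the $k=0$ specialization buys self-containedness within the Fibonacci section and a consistency check on the proposition's coefficients; both are sound modulo the boundary remark above.
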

 The preceding equation generalizes the formula for the number of all standard composition of $n$ which is obtained for $m=1.$
\begin{remark}
The preceding formula generates the following sequences in OEIS

\begin{tabular}{rcc}m=1 or m=2, {A000079},&m=3, {A008776},&m=4, {A002001}\\
m=5, {A052934},& m=6, {A055275}.\end{tabular}
\end{remark}

\begin{remark}
We again state some  sequences from OEIS generated with the formula from the preceding proposition.

\begin{tabular}{ll}m=0,\;k=1 {A001045} (Jacobsthal numbers)&m=1,\;k=1 { A000129}, ( Pell numbers),\end{tabular}

\begin{tabular}{lll}
m=2, k=1, {A028859};&m=3, k=1, {A007484};&m=-1, k=2, {A007051};\\ m=0, k=2, {A000244};&
m=1, k=2, {A007052};&m=2, k=2, {A001353};\\m=3, k=2, { A020698};&m=-1, k=3, {A147722};&
m=1, k=3, { A005054};\\m=3, k=3, { A078469}.&&\end{tabular}

\end{remark}
For our last result we need the following forth terms
recursion  for squares of Fibonacci numbers:
\begin{lemma} The following equation holds
\[F_{n+3}^2=2F_{n+2}^2+2F_{n+1}^2-F_n^2.\]
\end{lemma}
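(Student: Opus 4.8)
The final statement is the identity
\[F_{n+3}^2 = 2F_{n+2}^2 + 2F_{n+1}^2 - F_n^2,\]
a fourth-order linear recursion with constant coefficients satisfied by the squares of Fibonacci numbers.

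The plan is to reduce everything to the basic one-step recursion $F_{j+1}=F_j+F_{j-1}$ and then verify the identity by direct algebraic expansion. First I would express the two ``outer'' squares in terms of $F_{n+1}$ and $F_n$. Writing $F_{n+2}=F_{n+1}+F_n$ and $F_{n+3}=F_{n+2}+F_{n+1}=2F_{n+1}+F_n$, I can square these:
\[F_{n+2}^2=F_{n+1}^2+2F_{n+1}F_n+F_n^2,\qquad F_{n+3}^2=4F_{n+1}^2+4F_{n+1}F_n+F_n^2.\]
Substituting these into the right-hand side $2F_{n+2}^2+2F_{n+1}^2-F_n^2$ gives a polynomial in the three quantities $F_{n+1}^2$, $F_{n+1}F_n$, and $F_n^2$; collecting coefficients should produce exactly $4F_{n+1}^2+4F_{n+1}F_n+F_n^2$, which is $F_{n+3}^2$. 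This is a purely mechanical check with no genuine obstacle—the identity holds as a polynomial identity once both sides are reduced to the monomial basis $\{F_{n+1}^2,F_{n+1}F_n,F_n^2\}$, so no special Fibonacci fact beyond the defining recursion is needed.

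An alternative, slightly more structural route is to note that the sequence $x_j:=F_j^2$ satisfies a linear recursion with characteristic roots obtained by squaring and multiplying the roots $\varphi,\psi$ of $x^2=x+1$; since $\varphi\psi=-1$, the three characteristic values for $F_j^2$ are $\varphi^2,\psi^2,\varphi\psi=-1$, giving the characteristic polynomial $(x-\varphi^2)(x-\psi^2)(x+1)$. Expanding this using $\varphi^2+\psi^2=3$ and $\varphi^2\psi^2=1$ yields $x^3-2x^2-2x+1$, whose coefficients reproduce the stated recursion. Either approach works; I would present the first, since it is self-contained and requires only the definition of the Fibonacci numbers, whereas the characteristic-root argument, though conceptually cleaner, carries more notational overhead.

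I do not anticipate a hard step here: the entire content is the verification that the coefficients $(2,2,-1)$ are forced, and both the substitution method and the characteristic-polynomial method confirm this immediately. The one point requiring minor care is bookkeeping of the cross term $F_{n+1}F_n$, whose coefficient must cancel correctly to $4$; a sign slip there is the only realistic source of error.
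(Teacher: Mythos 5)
Your proof is correct and follows exactly the paper's method: the paper likewise proves the identity ``by squaring the expressions $F_{n+3}=2F_{n+1}+F_n$ and $F_{n+2}=F_{n+1}+F_n$,'' which is precisely your first route. Your expansion $2(F_{n+1}^2+2F_{n+1}F_n+F_n^2)+2F_{n+1}^2-F_n^2=4F_{n+1}^2+4F_{n+1}F_n+F_n^2=F_{n+3}^2$ checks out, so nothing is missing.
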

\begin{proof}

The formula is easy to prove by squaring the expressions $F_{n+3}=2F_{n+1}+F_n$ and $F_{n+2}=F_{n+1}+F_n.$

\end{proof}
\begin{proposition} Let $n$ be a positive integer, and let
$b_i=F_{k+i-1}^2,\;(i=1,2,\ldots).$ Then
\[c(1,\mathbf b)=F_k^2,\;c(2,\mathbf b)=F_{k+1}^2+F_k^4,\;c(3,\mathbf b)=F_{k+2}^2+2F_{k}^2F_{k+1}^2+F_k^6,\]
and, for $n>3,$
\[c(n,\mathbf b)=(F_k^2+2)c(n-1,\mathbf b)+(2F_{k-1}^2-F_{k-2}^2+2)c(n-2,\mathbf b)-(F_{k-1}^2+1)c(n-3,\mathbf b).\]
\end{proposition}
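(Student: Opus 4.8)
The plan is to mirror the telescoping technique used in the square-function case, but now with the four-term recursion for squares of Fibonacci numbers (the preceding lemma) playing the role that the discrete second difference played there. Since $b_{n+1-i}=F_{k+(n+1-i)-1}^2=F_{k+n-i}^2$, equation (\ref{e1}) reads
\[a_{n+1}=\sum_{i=1}^{n}F_{k+n-i}^2\,a_i.\]
First I would record the base cases: substituting $n=1,2,3$ and using $a_1=1$ gives $a_2=F_k^2$, $a_3=F_{k+1}^2+F_k^4$, and $a_4=F_{k+2}^2+2F_k^2F_{k+1}^2+F_k^6$ at once, which are exactly $c(1,\mathbf b),c(2,\mathbf b),c(3,\mathbf b)$.

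For the recursion, the key move is to write down $a_{n+2}$, peel off its top summand, and then apply the lemma $F_{j+3}^2=2F_{j+2}^2+2F_{j+1}^2-F_j^2$ (with $j=k+n-2-i$) to every remaining term. Concretely,
\[a_{n+2}=F_k^2\,a_{n+1}+\sum_{i=1}^{n}F_{k+n+1-i}^2\,a_i,\]
and replacing $F_{k+n+1-i}^2$ by $2F_{k+n-i}^2+2F_{k+n-1-i}^2-F_{k+n-2-i}^2$ breaks the sum into three convolutions. Each of these I would re-identify with an earlier $a$: the first is exactly $a_{n+1}$; the second equals $a_n$ plus the boundary term $F_{k-1}^2a_n$ arising from $i=n$; the third equals $a_{n-1}$ plus the two boundary terms $F_{k-1}^2a_{n-1}$ and $F_{k-2}^2a_n$ arising from $i=n-1$ and $i=n$.

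Collecting the coefficients of $a_{n+1},a_n,a_{n-1}$ then yields
\[a_{n+2}=(F_k^2+2)a_{n+1}+(2F_{k-1}^2-F_{k-2}^2+2)a_n-(F_{k-1}^2+1)a_{n-1},\]
valid for $n\geq 3$; rewriting in terms of $c(n,\mathbf b)=a_{n+1}$ and shifting the index gives the claimed recursion for $n>3$.

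The main obstacle is purely the boundary bookkeeping: applying the lemma shifts indices downward, so the endpoints of the three sums fall short of those defining $a_n$ and $a_{n-1}$, and one must track the residual terms $F_{k-1}^2$ and $F_{k-2}^2$ (which are precisely the constants appearing in the target coefficients) without an off-by-one slip. One should also note that for small $k$ the shifted arguments may be nonpositive, but since $F_{-m}^2=F_m^2$ the lemma holds for every integer index, so the manipulation is legitimate throughout. As a consistency check, $C(x)=B(x)/(1-B(x))$ with $B(x)=x\sum_{j\geq 0}F_{k+j}^2x^j$ is rational with a cubic denominator, because $\sum_{j\geq0}F_{k+j}^2x^j$ has denominator $1-2x-2x^2+x^3$ (the reciprocal of the lemma's characteristic polynomial $t^3-2t^2-2t+1$); this already predicts a homogeneous order-three recursion with constant coefficients.
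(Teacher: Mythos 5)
Your proof is correct and follows essentially the same route as the paper: both rest on the four-term lemma $F_{j+3}^2=2F_{j+2}^2+2F_{j+1}^2-F_j^2$ applied inside the convolution defining the $a$'s, with the shifted sums re-identified as $a_{n+1},a_n,a_{n-1}$ plus the boundary terms $F_{k-1}^2,F_{k-2}^2$. Your one streamlining---peeling off the top summand $F_k^2a_{n+1}$ from $a_{n+2}$ before applying the lemma, so that the coefficient $F_k^2$ appears directly---lets you avoid the paper's second application of the lemma (which it needs to collapse $2F_{k-1}^2+2F_{k-2}^2-F_{k-3}^2$ into $F_k^2$), and your remark that $F_{-m}^2=F_m^2$ keeps everything valid for small $k$ is a point the paper leaves implicit.
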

\begin{proof}
In this case we have
\[a_{n+1}=\sum_{i=1}^nF_{k+n-i}^2a_i.\]
Using the preceding lemma  yields
\[a_{n+1}=2\sum_{i=1}^nF_{k+n-1-i}^2a_i+2\sum_{i=1}^nF_{k+n-2-i}^2a_i-\sum_{i=1}^nF_{k+n-3-i}^2a_i=\]
\[=2F_{k-1}^2a_n+2a_n+2F_{k-2}^2a_n+2F_{k-1}^2a_{n-1}+2a_{n-1}-F_{k-3}^2a_n-\]\[-F_{k-2}^2a_{n-1}-F_{k-1}^2a_{n-2}-a_{n-2}.\]
Using the preceding lemma once more we obtain
\[a_{n+1}=(F_k^2+2)a_n+(2F_{k-1}^2-F_{k-2}^2+2)a_{n-1}-(F_{k-1}^2+1)a_{n-2},\]
and the assertion is proved.
\end{proof}
\begin{remark} The following two sequences in OEIS are generated by the preceding formula:\\
m=0, {A054854,} m=1, { A030186}.
\end{remark}

\bigskip
\hrule
\bigskip

\noindent 2000 {\it Mathematics Subject Classification}: Primary
11P99; Secondary 11B39.

\noindent \emph{Keywords: } composition of a natural number, Fibonacci number, Catalan number.

\end{document}